\documentclass[11pt]{amsart} 
\usepackage{latexsym}
\usepackage{anyfontsize} %the T&F interact.cls file has some non-integer font sizes (like 5.5) so this fixes that warning
\usepackage{tikz} % for diagrams
\usetikzlibrary{cd} %showing a commutative diagram for demonstration purposes;
\usepackage{color}
\usepackage{hyperref}
\usepackage{url}
\usepackage{breakurl}
\newcommand{\bburl}[1]{\textcolor{blue}{\url{#1}}}

\theoremstyle{plain}
\numberwithin{equation}{section} %change this to make globally numbered environments
\newtheorem{thm}{Theorem}[section]
\newtheorem{theorem}[thm]{Theorem}
\newtheorem{lemma}[thm]{Lemma}
\newtheorem{example}[thm]{Example}
\newtheorem{definition}[thm]{Definition}

\newtheorem{corollary}[thm]{Corollary}
\newtheorem{remark}[thm]{Remark}

\numberwithin{table}{section} %change this and the following line to make globally numbered tables and figures
\numberwithin{figure}{section}

\begin{document}

%	\monthyear{Month Year}
%	\volnumber{Volume, Number}
	\setcounter{page}{1}
	
	\title[Gaussian Behavior and Geometric Gaps ...]{Gaussian Behavior and Geometric Gaps in Decompositions from Recurrences with Zero Coefficients}
	
\author{S. Salami}
\address{Institute of Mathematics and Statistics, Rio de Janeiro State University, Maracanã, Rio de Janeiro, 20950-000, RJ, Brazil}
\email{Sajad.salami@ime.uerj.br}
	\maketitle

	\begin{abstract}
		Zeckendorf's theorem establishes a unique representation for positive integers as sums of non-consecutive Fibonacci numbers. This result has been generalized to Positive Linear Recurrence Sequences (PLRS), where key statistical properties, such as the Gaussian distribution of summands, depend on strictly positive recurrence coefficients. This paper investigates the consequences of relaxing this condition by studying \textit{Zero Linear Recurrence Relations (ZLRRs)}, where the leading coefficient is zero ($c_1=0$). 
		Focusing on the \textit{Lagonacci sequence} ($Z_{n+1}=Z_{n-1}+Z_{n-2}$) as a primary case study, we demonstrate that while the uniqueness of decompositions is lost, fundamental statistical behaviors persist. We prove that the number of summands in the canonical greedy decomposition converges to a \textit{Gaussian distribution} and that the distribution of gaps between indices decays \textit{geometrically}. 
		
		Furthermore, we utilize the \textit{principle of equivalence of ensembles} to show these properties are robust for a wide class of ZLRRs. Finally, we quantify the non-uniqueness of these systems, proving that the number of legal decompositions grows \textit{exponentially} at a rate $\alpha =2$, significantly exceeding the growth of the underlying sequence.
	\end{abstract}

%	\begin{keywords}
%		Zeckendorf's theorem; 
%		Lagonacci sequence; 
%		Greedy algorithms; 
%		Gaussian distribution; 
%		Gap distribution; 
%		Non-unique representations;
%		Equivalence of ensembles
%	\end{keywords}
		\noindent {\bf MSC2020: } 11B39, 11K65
		
		\noindent {\bf keywords:}
				Zeckendorf's theorem; 
				Lagonacci sequence; 
				Greedy algorithms; 
				Gaussian distribution; 
				Gap distribution; 
				Non-unique representations;
				Equivalence of ensembles

	%%%%%%%%%%%%%%%%%%%%%%%%%%%%%%%%%%%%%%%%%

	\section{Introduction}
	
	A cornerstone of additive number theory is Zeckendorf's theorem, which asserts that every positive integer can be uniquely represented as a sum of non-consecutive Fibonacci numbers \cite{zeckendorf1972}. This fundamental result establishes a bijective correspondence between the positive integers and a specific set of binary strings, facilitating extensive combinatorial and statistical analysis. In recent decades, this framework has been generalized to Positive Linear Recurrence Sequences (PLRS), characterized by recurrence relations with strictly positive coefficients \cite{kologlu2011summands}.
	
	A central finding in the study of PLRS is that the number of summands in the legal decomposition of a randomly chosen integer converges to a Gaussian distribution \cite{MillerWang2012JCTA}. Furthermore, the distribution of gaps between the indices of these summands has been shown to decay geometrically \cite{bower2015gaps}. These statistical regularities have proven remarkably robust, extending to more abstract settings such as $f$-decompositions \cite{DemontignyEtAl2014} and higher-dimensional lattices \cite{ChenEtAl2019Lattice}.
	
	However, the existing literature relies heavily on the "positivity" of the recurrence coefficients to ensure the uniqueness of the representation. This paper addresses a significant theoretical gap by investigating Zero Linear Recurrence Relations (ZLRRs), where the leading coefficient is zero. In such systems, the property of unique decomposition is immediately lost \cite{MartinezEtAl2022_I}. By focusing on the Lagonacci sequence ($Z_{n+1} = Z_{n-1} + Z_{n-2}$) as a canonical case study, we analyze the statistical properties of the canonical representation produced by the greedy algorithm.
	
	Our main results demonstrate that the hallmark statistical signatures of PLRS—Gaussianity of the summand count and geometric decay of gap distributions—are preserved in the ZLRR setting. These findings suggest that such statistical laws are more fundamental properties of the underlying linear recurrence structure than previously understood, persisting even when the requirement of uniqueness is relaxed. Furthermore, we quantify the extent of non-uniqueness by proving that the number of legal decompositions grows exponentially, marking a significant departure from the single-representation framework of traditional Zeckendorf-type systems.

	\section{Background: Generalizing Zeckendorf's Theorem}
	This section reviews the foundational concepts of number representations based on linear recurrence sequences, starting with the classical Fibonacci case and progressing to the various generalizations that provide context for this paper.
	
	\subsection{The Classic Case: Fibonacci Numbers and PLRS}
	The Fibonacci sequence $\{F_n\}_{n=1}^\infty$ is often defined with $F_1=1, F_2=1$, but for Zeckendorf representations, it is common to start with $F_1=1, F_2=2, F_3=3, \dots$, which is is called the sequence of shifted Fibonacci numbers.  The cornerstone of representations using this sequence is Zeckendorf's theorem.
	
	\begin{theorem}[Zeckendorf, 1972 \cite{zeckendorf1972}]
		Every positive integer $m$ can be represented uniquely as a sum of non-consecutive Fibonacci numbers. That is, $m = \sum_{i=1}^k F_{n_i}$ where $n_{i+1} > n_i + 1$ for all $i$.
	\end{theorem}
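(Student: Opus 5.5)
We prove existence and uniqueness separately, both by strong induction on $m$, using the shifted Fibonacci numbers $F_1=1$, $F_2=2$, $F_{n+1}=F_n+F_{n-1}$. The basic tool is the identity
\begin{equation*}
F_{n}+F_{n-2}+F_{n-4}+\cdots = F_{n+1}-1,
\end{equation*}
where the sum runs down to $F_1$ or $F_2$ according to parity. It follows by induction on $n$ from $F_{n+1}=F_n+F_{n-1}$, with base cases $F_1=1=F_2-1$ and $F_2=2=F_3-1$. Its consequence is what we really need: any sum of distinct, pairwise non-consecutive $F_i$'s with largest index $n$ is at most $F_{n+1}-1<F_{n+1}$.

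For existence we use the greedy algorithm. Given $m\ge 1$, let $n$ be the largest index with $F_n\le m$, so that $F_n\le m<F_{n+1}$. If $m=F_n$ we are done. Otherwise $1\le m-F_n<F_{n+1}-F_n=F_{n-1}$. By induction, $m-F_n$ has a decomposition into non-consecutive terms, and every index in it is at most $n-2$ because $m-F_n<F_{n-1}$. Appending $F_n$ therefore keeps the indices non-consecutive, which gives a decomposition of $m$. The base case $m=1=F_1$ is immediate.

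For uniqueness, suppose $m$ has two decompositions, $m=\sum_{i\in A}F_i=\sum_{j\in B}F_j$, with $A\ne B$ both non-consecutive index sets. Remove the common elements to get $A'=A\setminus B$ and $B'=B\setminus A$. These are still non-consecutive sets with equal sums, and they are not both empty. If one were empty the other would have to be empty too, since the sums are positive, so both are nonempty. Let $a=\max A'$ and $b=\max B'$; they are distinct, so without loss of generality $a>b$. By the bound above, $\sum_{j\in B'}F_j\le F_{b+1}-1<F_{b+1}\le F_a\le\sum_{i\in A'}F_i$, which is a contradiction. Alternatively, one can show that any valid decomposition of $m$ must use the greedy largest term $F_n$ (by the same bound, a decomposition with top index at most $n-1$ sums to less than $F_n\le m$) and then induct on $m-F_n$.

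The only real obstacle is proving the telescoping bound cleanly with the shifted indexing, in particular checking the small cases $n=1,2$ where the alternating sum ends at $F_1$ or $F_2$. Once that inequality is in hand, both the existence and uniqueness steps are routine.
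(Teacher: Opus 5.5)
Your proof is correct. It is the standard argument: the greedy algorithm gives existence, and uniqueness comes from the bound that a sum of pairwise non-consecutive terms with largest index $n$ is at most $F_{n+1}-1$, applied to $\max(A\setminus B)$ versus $\max(B\setminus A)$. All the inequalities check out with the shifted indexing $F_1=1$, $F_2=2$. The paper gives no proof of this statement (it is quoted from Zeckendorf as background), so there is no competing route to compare against.

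Two steps deserve one more line each.

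\begin{itemize}
\item \emph{From the identity to the bound.} The passage from the alternating-sum identity to the bound for an \emph{arbitrary} non-consecutive set should be explicit. If $n=i_1>i_2>\cdots>i_k\ge 1$ with differences at least $2$, then $i_j\le n-2(j-1)$, so the sum is dominated term by term by $F_n+F_{n-2}+\cdots$. Alternatively, induct directly: such a sum is $F_n$ plus a non-consecutive sum with top index at most $n-2$, hence at most $F_n+F_{n-1}-1$. This route makes the identity unnecessary.
\item \emph{The identity $F_{n+1}-F_n=F_{n-1}$ in the existence step.} It needs $n\ge 2$. This is harmless: for $n\le 2$ one has $F_{n+1}-F_n=1$, which forces $m=F_n$, so the ``otherwise'' branch never arises there.
\end{itemize}

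Your proof also separates cleanly into the two ingredients the paper later pulls apart. The greedy existence step is exactly the prefix mechanism of Lemma~\ref{lem:PrefixZLRR}, which survives for ZLRRs. The sharp bound driving uniqueness has no Lagonacci analogue: already $Z_n+Z_{n-2}>Z_{n-1}+Z_{n-2}=Z_{n+1}$ whenever $Z_n>Z_{n-1}$, so your uniqueness argument cannot be transplanted there.
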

	
	The statistical properties of this representation were first explored by Lekkerkerker.
	
	\begin{theorem}[Lekkerkerker, 1952 \cite{lekkerkerker1952}]
		Let $k(m)$ be the number of summands in the Zeckendorf decomposition of $m$. The average number of summands for integers in $[F_n, F_{n+1})$ converges to a constant as $n \to \infty$. Consequently, for large $m$, the expected value of $k(N)$ is approximately $c \log m$ for a constant $c$ related to the golden ratio.
	\end{theorem}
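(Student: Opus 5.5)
The plan is to count, for each $n$ and each $k$, the number $p_{n,k}$ of integers $m\in[F_n,F_{n+1})$ whose Zeckendorf decomposition has exactly $k$ summands. The average over the block is then an explicit ratio, and its growth in $n$ can be read off. By Zeckendorf's theorem, such integers correspond bijectively to choices of indices $n=n_k>n_{k-1}>\dots>n_1\geq 1$ with consecutive gaps at least $2$. Indeed, the greedy algorithm forces the largest summand of any $m\in[F_n,F_{n+1})$ to be $F_n$, because $F_{n-1}+F_{n-3}+\cdots<F_n$. With that bijection in place, a stars-and-bars count of the gap configurations gives
\[
p_{n,k}=\binom{n-k}{k-1}, \qquad \sum_k p_{n,k}=F_{n+1}-F_n=F_{n-1}.
\]
The quantity we want is
\[
\mu_n=\frac{\sum_k k\,p_{n,k}}{F_{n-1}}.
\]

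Rather than evaluating the binomial sums directly, we will use a generating-function or recurrence argument. Conditioning on the second-largest summand, or equivalently on whether $n_{k-1}=n-2$ or $n_{k-1}\le n-3$, gives a recurrence for $T_n:=\sum_k k\,p_{n,k}$ of the form
\[
T_n=T_{n-1}+T_{n-2}+F_{n-3}
\]
(up to the exact shifts with this indexing convention). In other words, $T_n$ satisfies the Fibonacci recurrence with a Fibonacci forcing term. Such an inhomogeneous recurrence has solution $T_n=(an+b)\phi^{n}+O(\phi^{-n})$, where $\phi=\frac{1+\sqrt5}{2}$. Dividing by $F_{n-1}\sim \phi^{n-1}/\sqrt5$ gives $\mu_n=cn+d+o(1)$ with $c=\frac{1}{\phi^2+1}$. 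So the average number of summands \emph{per index} converges to the constant $c$; this is the precise sense in which the statement's "converges to a constant" should be read. The main computation is finding $a$; everything else follows from Binet's formula.

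For the "consequently" part, take $m$ in the block $[F_n,F_{n+1})$. Since $F_n\asymp\phi^n$, we have $n=\log m/\log\phi+O(1)$, and hence the expected value of $k$ is $\frac{c}{\log\phi}\log m+O(1)$. To state this for a uniformly random $m\le N$ rather than within one block, we average $\mu_j$ over the blocks $j\le n$, each weighted by $F_{j-1}$. The weights grow geometrically, so the top blocks dominate and the linear-in-$n$ behaviour survives.

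I expect the main obstacle to be getting the inhomogeneous recurrence for $T_n$ exactly right, including the boundary cases at small $n$ and the indexing shift caused by starting with $F_1=1,F_2=2$. A clean alternative is the bivariate generating function
\[
\sum_{n,k}p_{n,k}x^ny^k=\frac{xy}{1-x-yx^2}.
\]
Differentiating in $y$ at $y=1$ yields $\frac{x^3}{(1-x-x^2)^2}$ times a simple factor, and the double pole at $x=1/\phi$ immediately produces the $n\phi^n$ term and the constant $c$. I would use this generating function as the primary route and the recurrence as a check.
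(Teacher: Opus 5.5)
The paper gives no proof of this statement. It is quoted from Lekkerkerker as background, so there is no argument in the paper to compare yours with.

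Judged on its own, your proposal is correct and is the standard combinatorial proof. The following all check out:
\begin{itemize}
\item the bijection between $[F_n,F_{n+1})$ and index sets with top index $n$ and gaps at least $2$;
\item the count $p_{n,k}=\binom{n-k}{k-1}$;
\item the recurrence $T_n=T_{n-1}+T_{n-2}+F_{n-3}$, which is exact for $n\ge 3$ once you set $F_0=1$;
\item the bivariate generating function $xy/(1-x-yx^2)$.
\end{itemize}
Together they give $\mu_n=n/(\phi^2+1)+O(1)$.

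You were right that ``converges to a constant'' cannot be meant literally, since the block average grows linearly in $n$. The true content is $\mu_n/n\to 1/(\phi^2+1)$, i.e.\ an average of $\log m/((\phi^2+1)\log\phi)+O(1)$.

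Your argument is the Fibonacci instance of the generating-function template the paper later uses in Theorem~\ref{thm:GaussianGreedy}. What makes your recursion close exactly is that the remainders $m-F_n$ run over precisely $[0,F_{n-1})$, because $F_{n+1}-F_n=F_{n-1}$ is again a term of the sequence.

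A few small corrections:
\begin{itemize}
\item With $F_1=1$, $F_2=2$, the paper's $F_n$ is the $(n+1)$-st standard Fibonacci number, so $F_{n-1}\sim\phi^{n}/\sqrt5$, not $\phi^{n-1}/\sqrt5$. Carried through literally, your normalization would give $\phi/(\phi^2+1)$. The value you actually state, $1/(\phi^2+1)$, is the correct one; the leading coefficient is $a=\phi/(\phi^2+1)^2$.
\item The error term in $T_n$ is $O(n\phi^{-n})$ rather than $O(\phi^{-n})$, because the forcing term also resonates with the conjugate root $\psi=-1/\phi$.
\item The $y$-derivative at $y=1$ is exactly $x(1-x)/(1-x-x^2)^2=x/(1-x-x^2)+x^3/(1-x-x^2)^2$. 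That is cleaner than ``$x^3/(1-x-x^2)^2$ times a factor''.
\item Weighting full blocks only handles cutoffs of the form $F_{n+1}$. For a general cutoff you must also treat the partial top block, e.g.\ via $k(m)=1+k(m-F_n)$ and a short induction on the cutoff. This shows the partial block shifts the average by only $O(1)$, which is needed because it can contain a positive proportion of the integers.
\end{itemize}
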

	
	Zeckendorf's theorem was generalized to a larger class of sequences, preserving the crucial property of unique decomposition.
	
	\begin{definition}
		A sequence $\{H_{n}\}_{n=1}^{\infty}$ of positive integers is called a Positive Linear Recurrence Sequence (PLRS) if the following properties hold:
		\begin{enumerate}
			\item \textbf{Recurrence relation:} There are non-negative integers $L, c_{1}, \dots, c_{L}$ such that
			\begin{equation*}
				H_{n+1} = c_{1}H_{n} + \dots + c_{L}H_{n+1-L},
			\end{equation*}
			with $c_{1}$ and $c_{L}$ positive.
			\item \textbf{Initial conditions:} $H_{1}=1$, and for $1 \le n < L$ we have
			\begin{equation*}
				H_{n+1} = c_{1}H_{n} + c_{2}H_{n-1} + \dots + c_{n}H_{1} + 1.
			\end{equation*}
		\end{enumerate}
		A decomposition $\sum_{i=1}^{t} a_{i} H_{t+1-i}$, and its associated  sequence $\{ a_i\}_{i=1}^m$, of a positive integer $m$ are \textbf{legal}  if $a_{1} > 0$, the other $a_{i} \ge 0$, and one of the following conditions holds:
		\begin{itemize}
			\item [(i)] We have $t < L$ and $ a_i=c_i$ for $1 \leq i \leq t$.
			\item [(ii)] There exists $s \in \{ 1, \cdots, L\}$ such that 
			$$a_1=c_1, \cdots, a_s=c_s, \ \ a_s< c_s, and \ $$
			$a_{s+1}, \cdots, a_{s+\ell}=0$ for all $\ell \geq 0$, and 
			$\{a_{s+\ell+i}\}_{i=1}^{t-s-\ell}$ is legal.
		\end{itemize}
	\end{definition}
	
	For PLRS, a unique "legal" decomposition exists, generalizing the non-consecutive rule. The statistical results were also extended.
	
	\begin{theorem}[Miller and Wang, 2012 \cite{MillerWang2012JCTA}]
		For a wide class of PLRS, the distribution of the number of summands in the legal decomposition of integers converges to a Gaussian distribution.
	\end{theorem}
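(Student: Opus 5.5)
We would follow the generating-function strategy. Fix $n$ and look at the integers $m \in [H_n, H_{n+1})$. By the uniqueness of legal decompositions for a PLRS, these integers correspond bijectively to legal sequences of length $n$ with $a_1>0$. We then ask for the number $p_{n,k}$ of such sequences having exactly $k$ summands; here a coefficient $a_i$ is counted with multiplicity $a_i$. The goal is to show that for $X_n$, a random variable with $\Pr(X_n=k)=p_{n,k}/\sum_k p_{n,k}$, the normalized quantity $(X_n-\mu_n)/\sigma_n$ converges to a standard normal, where $\mu_n$ and $\sigma_n^2$ both grow linearly in $n$.

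\textbf{Step 1: a recurrence in $n$ and $k$.} We decompose a legal sequence by its first block. This block is $c_1,\dots,c_{s-1},a_s$ with $a_s<c_s$, followed by zeros, or else it is a full copy of $(c_1,\dots,c_L)$. Each case contributes a known number of summands and shifts the length by $s$. This gives a linear recurrence
\[
p_{n+1,k}=\sum_{j=1}^{L}\sum_{b} p_{n+1-j,\,k-b}
\]
with finitely many fixed shifts. Consequently
\[
\mathcal G(x,y)=\sum_{n,k}p_{n,k}x^ky^n=\frac{N(x,y)}{D(x,y)},
\]
where $D(x,y)=1-\sum_{j=1}^L P_j(x)y^j$ and each $P_j$ is a polynomial with nonnegative coefficients, $P_j(1)=c_j$.

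\textbf{Step 2: extract the dominant root.} Set $g_n(x)=\sum_k p_{n,k}x^k$. By partial fractions in $y$,
\[
g_n(x)=\sum_i q_i(x)\,e_i(x)^{-n},
\]
where the $e_i(x)$ are the roots of $D(x,\cdot)$. At $x=1$ we have $D(1,y)=1-\sum c_jy^j$. Since $c_1>0$, this is the characteristic polynomial of an aperiodic positive recurrence, so it has a unique simple root $e_1(1)=1/\lambda$ of smallest modulus. By the implicit function theorem, $e_1(x)$ stays simple, real analytic and strictly dominant for $x$ in a neighbourhood of $1$. Note that $c_1>0$ is exactly the hypothesis used here; the later ZLRR sections presumably need to replace it.

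\textbf{Step 3: moments and Gaussianity.} Write $\alpha(x)=-\log e_1(x)$, so that
\[
\log g_n(e^{t})=n\,\alpha(e^{t})+O(1)
\]
uniformly for small real or complex $t$. Differentiating gives
\[
\mu_n=An+O(1),\qquad \sigma_n^2=Bn+O(1),
\]
with $A=\tfrac{d}{dt}\alpha(e^t)\big|_0$ and $B=\tfrac{d^2}{dt^2}\alpha(e^t)\big|_0$. The moment generating function of $(X_n-\mu_n)/\sigma_n$ is
\[
\exp\!\big(-t\mu_n/\sigma_n\big)\,\frac{g_n(e^{t/\sigma_n})}{g_n(1)}.
\]
Taylor expanding $\alpha$ to second order shows that it tends to $e^{t^2/2}$, and Lévy's continuity theorem (or the method of moments) then finishes the proof.

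\textbf{Main obstacle.} The hard part is showing $B>0$, so that the variance really grows linearly and does not degenerate. I would argue by contradiction. If $B=0$, then $\alpha(e^t)$ is affine in $t$ to second order. I would then show that this forces every admissible first block of length $j$ to carry the same number of summands per unit length, i.e.\ $b/j$ is constant across all terms of $D$. This can be ruled out for the "wide class" in the statement by exhibiting two blocks with different ratios, for instance a single summand $a_1=1$ (possible when $c_1\ge 2$ or $L\ge 2$) versus a full block. If the class genuinely contains degenerate cases, such as $H_n=2^{n-1}$-type examples, I would first verify them directly and exclude them in the hypothesis. A secondary technical point is uniform control of the nondominant roots near $x=1$; continuity of the roots handles it.
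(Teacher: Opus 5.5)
The paper does not prove this statement. It is quoted from Miller and Wang as background, so there is no in-paper argument to compare against. Your outline is essentially the Miller--Wang generating-function argument: a rational bivariate generating function, a simple dominant root $e_1(x)$ near $x=1$, a quasi-power expansion, and nondegeneracy of the variance. That architecture is sound, but two steps need repair.

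\textbf{Step 1: the block description is misstated.} Parse a legal string, with leading zeros allowed (i.e.\ integers in $[0,H_{n+1})$), into blocks $(c_1,\dots,c_{s-1},a_s)$ with $1\le s\le L$ and $0\le a_s<c_s$.

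\begin{enumerate}
\item A single zero digit is the block $s=1$, $a_1=0$. This block is admissible precisely because $c_1>0$, so that hypothesis is already used here, not only in Step 2.
\item If you attach trailing zeros to a block, as you do, the length shifts are unbounded. The finite recurrence you write then does not follow as stated. It also clashes with your definition of $p_{n,k}$ through $a_1>0$.
\item A full copy of $(c_1,\dots,c_L)$ is never legal. The only other piece is a terminal partial block $(c_1,\dots,c_t)$ with $t<L$. That piece, like the restriction $a_1>0$, affects only the numerator $N(x,y)$.
\end{enumerate}

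With this parse, $P_s(x)=x^{c_1+\cdots+c_{s-1}}(1+x+\cdots+x^{c_s-1})$. So your $D$ and $P_s(1)=c_s$ are right. Had you included a full block, $D(1,y)$ would carry an extra $-y^L$.

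\textbf{The variance step: your criterion for $B>0$ is correct, but your witnesses are not.} Write $D(x,y)=1-\sum_\beta x^{b_\beta}y^{\ell_\beta}$ over blocks $\beta$, where $\beta$ has $b_\beta$ summands and length $\ell_\beta$, and put $\Lambda(t)=\alpha(e^t)$. Then $\sum_\beta e^{tb_\beta-\ell_\beta\Lambda(t)}=1$ identically near $t=0$. Differentiating twice gives
\[
\Lambda''(0)\sum_\beta \ell_\beta\lambda^{-\ell_\beta}=\sum_\beta \lambda^{-\ell_\beta}\bigl(b_\beta-\ell_\beta\Lambda'(0)\bigr)^2 ,
\]
so $B=0$ exactly when $b_\beta/\ell_\beta$ is constant. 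Your proposed pair fails: $(1)$ is not a complete block when $c_1=1$, and the full block is not a block at all. Instead compare the zero block $(0)$, of ratio $0$, with:
\begin{enumerate}
\item $(c_1,\dots,c_{L-1},0)$ when $L\ge2$, which exists since $c_L>0$ and has $b=c_1+\cdots+c_{L-1}\ge1$;
\item $(1)$ when $L=1$ and $c_1\ge2$.
\end{enumerate}
Hence $B>0$ for every PLRS except the vacuous case $L=1$, $c_1=1$, and nothing needs to be excluded. In particular $H_n=2^{n-1}$ is not degenerate: its summand count on $[H_n,H_{n+1})$ is $1$ plus a binomial$(n-1,\tfrac12)$ variable.

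\textbf{A smaller point on Step 2.} Partial fractions presuppose distinct roots. It is cleaner to take the residue at the simple root $e_1(x)$ and bound the remainder by a Cauchy integral on a circle lying strictly between $|e_1(x)|$ and the other roots. This yields the uniform, analytic $O(1)$ term that Step 3 needs, with bounded $t$-derivatives by Cauchy estimates.

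With these repairs, your argument proves the statement for every nontrivial PLRS, not just a ``wide class.''
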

	
	\subsection{Departures from Uniqueness: ZLRRs and Signed Decompositions}
	This paper explores what happens when the positivity condition on $c_1$ is removed.
	
	\begin{definition}[Zero Linear Recurrence Relation (ZLRR)]
		A sequence $\{Z_n\}$ is generated by a \textit{ZLRR} if it satisfies a recurrence $Z_{n+1} = c_1 Z_n + \dots + c_L Z_{n-L+1}$ with positive integers $c_i$  and  $c_1 = 0$. 
	\end{definition}
	
	%\subsubsection{The Lagonacci Sequence}
	Throughout this work, we assume $Z_0=1$
	and $\lim_{n \rightarrow \infty} Z_n = \infty.$ We focus on a canonical ZLRR as defined below.
	
	\begin{definition}[Lagonacci Sequence]
		\label{Lagon}
		The \textit{Lagonacci sequence}  $\{Z_n\}_{n\ge0}$ is   defined by
		\[
		Z_{n+1}=Z_{n-1}+Z_{n-2}, \qquad Z_0=1,\; Z_1=2,\; Z_2=3,
		\]
		with characteristic polynomial $P(x)=x^3-x-1$ and dominant real root
		$$\lambda_1= \left( \frac{9-\sqrt{69}}{18}\right) ^{\frac{1}{3}} +
		\left( \frac{9+\sqrt{69}}{18}\right) ^{\frac{1}{3}} 
		\sim 1.324718,$$
		which is know as the {\it plastic ratio}, see \cite{Diskaya2021Plastic} for more properties.	
		Letting  $\lambda_2$ and ${\bar \lambda}_2$ be the other roots of $P(x)$ satisfying $|\lambda_2| < \lambda_1$. Thus,    $Z_n=   a\,\lambda_1^n + O(|\lambda_2|^N),$ 
		with $a=\frac{1}{P'(\lambda_1)}=\frac{1}{3\lambda_1^2-1} \sim 0.234487$, is  the Binet-type expansion of  the term  $Z_n$ Legonacci sequence. 
		
	\end{definition}

	For ZLRRs, uniqueness of the legal decomposition of positive integers is lost. While many decompositions may exist for an integer, we analyze the canonical one produced by a standard algorithm.
	
	\begin{definition}[Greedy Decomposition]
		Given an increasing integer sequence $\{H_n\}$, the \textit{greedy decomposition} of a positive integer $m$ is found by repeatedly subtracting the largest term $H_k \le m$ until the remainder is zero.
	\end{definition}

	\begin{example}
		\begin{enumerate}
			\item \textbf{Fibonacci (Unique):} The Zeckendorf decomposition of $m=22$ is $22 = 21 + 1 = F_7 + F_1$. (Using the sequence $F_1=1, F_2=2, \dots, F_6=13, F_7=21$).
			\item \textbf{Lagonacci (Greedy):} The greedy decomposition of $m=22$ is $22 = 19 + 3 = Z_7 + Z_2$.
			\item \textbf{Lagonacci (Non-Unique):} $m=22$ also has another legal decomposition: $22 = 13 + 9 = Z_6 + Z_5$. The existence of multiple such decompositions is a key feature of ZLRRs.
		\end{enumerate}
	\end{example}

	\subsection{Other Notions of Decompositions}
	The field has expanded in several other directions, further illustrating the breadth of Zeckendorf-style numeration systems.

	\subsubsection{Far-Difference Representations}
	Another way uniqueness is lost is by allowing negative summands.
	
	The {\it far-difference representation} allows sums of $\pm F_n$ with specific rules on the spacing of terms with the same or opposite signs. 
	This also leads to non-unique decompositions, but surprisingly, key statistical properties are preserved, with the number of positive and negative summands converging to a bivariate Gaussian \cite{miller2016far}. This provides a parallel to our work, where non-uniqueness stems from the recurrence structure rather than signed coefficients.

	\subsubsection{f-Decompositions}
	A more abstract generalization is the f-decomposition, where the legality rule is determined by a function.
	
	\begin{theorem}[Demontigny et al., 2014 \cite{DemontignyEtAl2014}]
		Let $f: \mathbb{N}_0 \to \mathbb{N}_0$ be a function. An f-decomposition using a sequence $\{H_n\}$ is a sum where if $a_n$ is a summand, the previous $f(n)$ terms are forbidden. For any such function $f$, there exists a unique increasing sequence $\{H_n\}$ such that every positive integer has a unique f-decomposition. If $f$ is periodic, the sequence $\{H_n\}$ satisfies a linear recurrence relation.
	\end{theorem}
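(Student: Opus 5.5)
The plan is to construct the sequence $\{H_n\}$ explicitly by a recurrence, prove existence and uniqueness of $f$-decompositions by strong induction on $n$, then show that no other sequence works, and finally derive a linear recurrence when $f$ is periodic. Throughout I use the convention that an $f$-decomposition is a sum of distinct terms $H_{n_1}>\dots>H_{n_k}$ in which, for each summand $H_{n_i}$, none of $H_{n_i-1},\dots,H_{n_i-f(n_i)}$ appears. I also set $H_j=0$ for $j\le 0$.

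\textbf{Construction.} For an $f$-decomposition using only indices $\le n$, the largest possible value is
\[
S_n=H_n+S_{n-f(n)-1},
\]
because taking $H_n$ forbids the next $f(n)$ indices below it. Every integer up to $S_n$ should be representable with indices $\le n$, and the next integer must then need a new term, so I would force $H_{n+1}=S_n+1$. Substituting $S_{n-f(n)-1}+1=H_{n-f(n)}$ gives
\[
H_1=1,\qquad H_{n+1}=H_n+H_{n-f(n)},
\]
where the second term is read as $1$ when $n-f(n)\le 0$ (the empty-sum case $S_{\le 0}=0$). I would state this carefully as the definition, because the boundary case is exactly where the conventions can go wrong.

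\textbf{Existence and uniqueness of decompositions.} The inductive claim is: every $m\in[0,H_{n+1})$ has exactly one $f$-decomposition with all indices $\le n$, and decompositions whose top index is exactly $n$ have values in $[H_n,H_{n+1})$.

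For existence, take $m\in[H_n,H_{n+1})$ and use the greedy step. The remainder satisfies $m-H_n<H_{n-f(n)}$. By induction the remainder has a decomposition with indices $\le n-f(n)-1$, and adjoining $H_n$ keeps the decomposition legal.

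For uniqueness, I would show that any legal decomposition with top index $n$ has value at least $H_n$ and at most $H_n+S_{n-f(n)-1}=H_{n+1}-1$. Hence the top index of any decomposition of $m$ is forced to be the unique $n$ with $H_n\le m<H_{n+1}$. The induction hypothesis applied to the remainder then finishes the argument.

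\textbf{Uniqueness of the sequence.} Let $\{G_n\}$ be any increasing sequence with the unique-decomposition property, and suppose it agrees with $\{H_n\}$ up to index $n$. The integers representable with indices $\le n$ are then exactly $[0,S_n]$, each represented once.
\begin{itemize}
\item If $G_{n+1}>S_n+1$, the integer $S_n+1$ has no decomposition, since any decomposition of it would have to use an index $\ge n+1$ and hence exceed it.
\item If $G_{n+1}\le S_n$, the integer $G_{n+1}$ has two decompositions: the single term $G_{n+1}$, and its old representation with indices $\le n$.
\end{itemize}
So $G_{n+1}=H_{n+1}$. I expect this step, together with the boundary conventions, to be the fiddliest part. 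The main risk is the case $n-f(n)\le 0$, where one must check that the empty decomposition is handled consistently.

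\textbf{Periodic case.} Suppose $f$ has period $p$, and let $M=\max f+1$. The state vector $v_n=(H_n,\dots,H_{n-M})$ evolves by $v_{n+1}=A_{n}v_n$. Here $A_n$ is a companion-type $0/1$ matrix that depends only on $f(n)$, and hence only on $n \bmod p$. Over one full period,
\[
v_{n+p}=Bv_n,\qquad B=A_{n+p-1}\cdots A_n .
\]
Conjugating by the appropriate partial products shows that the characteristic polynomial $\chi(x)$ of $B$ does not depend on the starting residue. By Cayley--Hamilton, each residue-class subsequence $(H_{r+kp})_k$ satisfies the linear recurrence with characteristic polynomial $\chi$. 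Therefore the whole sequence $H_n$ satisfies the linear recurrence with characteristic polynomial $\chi(x^p)$. This holds for $n$ large enough that every index $n-f(n)$ in play is positive, so the boundary convention no longer intervenes.
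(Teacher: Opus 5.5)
The paper gives no proof of this statement. It is quoted in the background section as a result of Demontigny et al., so there is no in-paper argument to compare yours against. Judged on its own, your proof is correct, and it is essentially the standard argument:
\begin{itemize}
\item the forced choice $H_{n+1}=S_n+1$, which yields $H_{n+1}=H_n+H_{n-f(n)}$ with your boundary reading;
\item strong induction showing that the legal decompositions with top index $n$ biject onto $[H_n,H_{n+1})$;
\item the gap-versus-collision dichotomy $G_{n+1}>S_n+1$ or $G_{n+1}\le S_n$, which pins down the sequence.
\end{itemize}
Your $1$-based indexing differs from the $\mathbb{N}_0$-indexing implicit in the statement. This is harmless: $f$ evaluated at the bottom index is vacuous in either convention, and the argument transfers verbatim.

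The one step that fails as written is in the periodic case, where you conjugate by partial products. The one-step matrices $A_n$ are never invertible. Since $n-f(n)\ge n-M+1$, the coordinate $H_{n-M}$ of $v_n$ is simply shifted out, so the last column of $A_n$ is zero. Hence you cannot conjugate $B_n$ into $B_{n+1}$ by $A_n$.

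The conclusion survives. Put $C=A_{n+p-1}\cdots A_{n+1}$. Periodicity gives $B_n=CA_n$ and $B_{n+1}=A_nC$, and $XY$ and $YX$ always have the same characteristic polynomial. Alternatively, you can drop the residue-independence claim altogether: if $\chi_r$ annihilates residue class $r$, then $\prod_{r=0}^{p-1}\chi_r(x^p)$ annihilates the whole sequence.

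Two cosmetic points. When $f(n)=0$ the top-left entry of $A_n$ is $2$, so $A_n$ is not a $0/1$ matrix. Also, the recurrence you obtain holds only for $n\ge M$; that suffices, or you can multiply the characteristic polynomial by $x^{M}$ to make it hold from the first term.
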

	This result highlights how flexible the concept of a "legal" decomposition can be while still preserving uniqueness.
	
	\subsubsection{Higher-Dimensional Lattices}
	Researchers have also extended decompositions to higher dimensions, where uniqueness is immediately lost but statistical regularities persist.
	
	\begin{theorem}[Chen et al., 2019 \cite{ChenEtAl2019Lattice}]
		Let a d-dimensional legal decomposition be a set of lattice points where each subsequent point has strictly smaller coordinates in all dimensions. The distribution of the number of summands in such decompositions among all "simple jump paths" from $(n, \dots, n)$ to the origin converges to a Gaussian distribution as $n \to \infty$.
	\end{theorem}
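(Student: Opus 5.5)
The plan is to reduce the statement to an explicit count and then apply a local limit argument via Stirling's formula. First I would parametrize a simple jump path from $(n,\dots,n)$ to the origin that uses $k$ jumps. Each jump strictly decreases every coordinate. So in each of the $d$ coordinates, the path records a sequence of $k$ positive decrements summing to $n$, i.e.\ a composition of $n$ into $k$ positive parts. Conversely, any $d$-tuple of such compositions, each with exactly $k$ parts, assembles into a unique path, with the $j$-th jump given by the $j$-th parts. The coordinates are independent once $k$ is fixed. Hence the number of paths with exactly $k$ summands is
\[
N_n(k)=\binom{n-1}{k-1}^{d}, \qquad 1\le k\le n .
\]
If the paper's convention counts the starting point or the origin as a summand, this only shifts $k$ by a constant, which is irrelevant for the limit. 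The random variable $K_n$ (number of summands of a uniformly chosen path) therefore has law $P(K_n=k)=N_n(k)/\sum_j N_n(j)$.

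Next I would establish a local central limit theorem for this law. Write $m=n-1$ and $k-1=m/2+x\sqrt{m}$. The standard Stirling estimate for the central binomial region gives, uniformly for $|x|\le m^{1/6}$,
\[
\binom{m}{k-1}=2^{m}\sqrt{\tfrac{2}{\pi m}}\,e^{-2x^{2}}\bigl(1+O(m^{-1/2}(1+|x|^{3}))\bigr).
\]
Raising to the $d$-th power gives $N_n(k)\approx 2^{dm}(2/(\pi m))^{d/2}e^{-2dx^{2}}$. This is a discretized Gaussian in $k$ with mean $(n+1)/2$ and variance $n/(4d)$.

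For the tails I would use the Chernoff-type bound $\binom{m}{j}\le 2^{m}e^{-2(j-m/2)^{2}/m}$. This shows that the $k$ with $|x|>m^{1/6}$ contribute a fraction $O(e^{-cm^{1/3}})$ of the total mass.

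Summing the local estimate over the central window (a Riemann sum with mesh $m^{-1/2}$) then yields the normalization $\sum_j N_n(j)\sim 2^{dm}(2/(\pi m))^{d/2}\sqrt{\pi m/(2d)}$. Combining these pieces gives
\[
P\!\left(\frac{K_n-(n+1)/2}{\sqrt{n/(4d)}}\le t\right)\to\Phi(t)
\]
for every $t$, which is the claimed Gaussian convergence. As a sanity check, I would also compute the mean and variance directly: the symmetry $k-1\leftrightarrow n-k$ gives the mean exactly, and the variance follows from the same Riemann-sum computation.

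The main obstacle is not the asymptotics, which are routine, but making the bijection in the first step airtight under the paper's precise definition of a ``simple jump path'' and of what counts as a summand. In particular, I need to check that every jump must decrease \emph{all} coordinates strictly, so that no coordinate may stay fixed. If weak decreases were allowed in some coordinates, the per-coordinate count would change from compositions to weak compositions, $\binom{n+k-1}{k-1}$. The same Stirling/local-limit strategy would still apply, with a different mean and variance. So I would fix the convention first and then run the computation above.
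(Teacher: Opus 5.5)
The paper gives no proof of this statement. It quotes the result from Chen et al.\ as background, so there is no internal argument to compare yours against.

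Judged on its own, your proof is correct and complete in outline. To my knowledge it is also the standard route for this result: count the paths exactly, then run a Stirling-type local limit theorem. The steps check out:
\begin{itemize}
\item The bijection with $d$-tuples of compositions of $n$ into $k$ positive parts gives the exact count $\binom{n-1}{k-1}^{d}$ for $1\le k\le n$.
\item The central estimate gives a discretized Gaussian with mean $(n+1)/2$ and variance $(n-1)/(4d)\sim n/(4d)$.
\item The Hoeffding bound $\binom{m}{j}\le 2^{m}e^{-2(j-m/2)^{2}/m}$ makes the tails $O(e^{-cm^{1/3}})$ relative to the total mass.
\item The Riemann sum gives the normalization $2^{dm}(2/(\pi m))^{d/2}\sqrt{\pi m/(2d)}$.
\end{itemize}

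Two small remarks.

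First, the obstacle you flag is already settled by the statement. It requires each subsequent point to have strictly smaller coordinates in all dimensions, so every coordinate's decrements form a composition into positive parts. The weak-composition variant never arises. Whether the start point or the origin is counted as a summand only shifts $k$ by a constant, as you say.

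Second, your relative error $O\bigl(m^{-1/2}(1+|x|^{3})\bigr)$ is true but only $O(1)$ at the edge $|x|=m^{1/6}$ of your window. This is harmless: raising $1+\epsilon$ with $\epsilon$ bounded to the $d$-th power gives $1+O_d(\epsilon)$. Weighted by $e^{-2dx^{2}}$, the error then contributes only a relative $O(m^{-1/2})$ to both the normalization and the distribution function. You can also sharpen it. Since $\binom{m}{m/2+h}$ is symmetric in $h$, the cubic term in the expansion of its logarithm vanishes. The true relative error is therefore $O\bigl((1+x^{4})/m\bigr)$, which is $O(m^{-1/3})=o(1)$ uniformly on $|x|\le m^{1/6}$.
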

	This demonstrates that Gaussianity can persist even when both the underlying sequence structure and the uniqueness property are fundamentally altered.
	
	\subsection{Statistical Properties of Gaps Between Summands}
	Beyond the number of summands, the distribution of the gaps between their indices is a key characteristic. For PLRS, this distribution is well-understood.
	
	\begin{theorem}[Bower et al., 2015 \cite{bower2015gaps}]
		For many PLRS, the probability of a gap of length $g$ between summands in the legal decomposition of integers in $[H_n, H_{n+1})$ converges to a distribution that decays geometrically for large $g$.
	\end{theorem}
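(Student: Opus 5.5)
Following \cite{bower2015gaps}, I will work with the integers $m\in[H_n,H_{n+1})$ and their legal decompositions, and count gaps directly. Write each decomposition in increasing index order, $m=H_{i_1}+\cdots+H_{i_k}$, listing index $i$ with multiplicity $a_i$. A gap is a difference $i_{j+1}-i_j\ge 0$. Let $X_{n}(g)$ be the total number of gaps of length $g$ over all $m$ in the interval. The claim is that the gap probability
$$P_n(g)=\frac{X_n(g)}{\sum_{g'}X_n(g')}$$
has a limit $P(g)$ with $P(g)=C\lambda_1^{-g}$ for $g$ large. Here $\lambda_1$ is the dominant root of the characteristic polynomial, and $C$ is an explicit constant.

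\textbf{Step 1: reduce gap counts to counting decompositions.} Fix the lower summand at index $i$ and the next summand at $i+g$, with $g\ge L$ so that no interaction with the block structure of the legality rule occurs. The legal decomposition then splits into three independent pieces:
\begin{itemize}
\item a legal tail below $i$, which also ends with $H_i$;
\item the empty stretch strictly between $i$ and $i+g$;
\item a legal head from $i+g$ up to the top index $n$.
\end{itemize}
By uniqueness of legal decompositions, the number of legal tails ending with $H_i$ equals $H_{i}-H_{i-1}$, up to the fixed correction from the block rule. Likewise, the number of legal heads starting at $i+g$ with top index $n$ is $H_{n-i-g+1}-H_{n-i-g}$, possibly with a bounded shift in the index. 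Hence
$$X_n(g)=\sum_{i}\bigl(H_{i}-H_{i-1}\bigr)\bigl(H_{n-i-g+1}-H_{n-i-g}\bigr)$$
up to boundary terms.

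\textbf{Step 2: insert the Binet expansion.} Use $H_j=a_1\lambda_1^j+O(|\lambda_2|^j)$, which holds for a PLRS because the dominant root is simple and strictly largest in modulus. Each summand of $X_n(g)$ then becomes
$$a_1^2(1-\lambda_1^{-1})^2\lambda_1^{n-g}\,\bigl(1+o(1)\bigr),$$
uniformly for indices away from the ends. Summing over the roughly $n$ admissible values of $i$ gives
$$X_n(g)\sim c\, n\,\lambda_1^{n-g}.$$
The total number of gaps is the sum over all $m$ of (number of summands $-1$). By Lekkerkerker-type results and \cite{MillerWang2012JCTA}, this total is $\sim \mu\, n\,H_n$ for a positive constant $\mu$. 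Dividing,
$$P_n(g)\to \frac{c}{\mu a_1}\lambda_1^{-g},$$
which is geometric decay with ratio $\lambda_1^{-1}$. Small gaps $g<L$ are handled separately: they only contribute finitely many constants and do not affect the large-$g$ behavior.

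\textbf{Main obstacle.} The hard part is the exact bookkeeping in Step 1. The tails and heads must be counted exactly, including the case where a coefficient block $(c_1,\dots,c_s)$ straddles index $i$ or $i+g$. The whole factorization hinges on the count of legal decompositions of a fixed length being expressible through consecutive differences of the $H_j$. I would first prove this by induction on the legality definition, conditioning on the first $s$ at which $a_s<c_s$. Once that identity is in hand, Step 2 is routine asymptotics with error terms of relative size $(|\lambda_2|/\lambda_1)^{\min(i,n-i-g)}$. These errors contribute only $O(1)$ to a sum of $n$ terms, so they vanish after division by $n$.
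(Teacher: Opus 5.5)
The paper gives no proof of this statement; it is quoted from Bower et al. Your route is the standard one. The paper uses the same shape in the proof of Theorem~\ref{thm:Gaps}: a \emph{first} difference below the gap times a \emph{second} difference above it. Your factorization for $g\ge L$ is sound: the block containing $i+g$ has length at most $L$, so it closes above $i$ and a fresh block starts at $i$. The problem is that the identity you plan to prove by induction in Step~1 is false, so that induction would fail.

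The tail is fine apart from an index slip. It is an arbitrary legal string with top index $i$, so it is counted exactly by $H_{i+1}-H_i$, with no correction needed.

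The head is where the identity breaks. It has two pinned digits, $a_n>0$ and $a_{i+g}>0$, whereas a first difference pins only the top one. So your formula also counts heads with a zero at $i+g$. With $n'=n-i-g+1$, the correct count is
\[
(H_{n'+1}-H_{n'})-(H_{n'}-H_{n'-1})=H_{n'+1}-2H_{n'}+H_{n'-1}.
\]
This holds because deleting or appending a trailing zero is a bijection between legal strings of top index $n'$ ending in $0$ and legal strings of top index $n'-1$. Appending a $0$ keeps a string legal because $c_L>0$, and deleting a final $0$ leaves trailing zeros or a legal terminal partial block.

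The second difference is asymptotically $(1-\lambda_1^{-1})$ times a first difference. That is a bounded index shift only when $\lambda_1-1$ is a power of $\lambda_1$. This happens for Fibonacci but not in general. For $c=(1,1,1)$, the heads are counted by $1,1,3,5,9,17,\dots$, while the first differences are $1,2,3,6,11,20,\dots$.

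The rate $\lambda_1^{-g}$ survives this error, but your constant does not:
\begin{itemize}
\item the true summand is $X_{i,i+g}(n)\sim a_1^2(\lambda_1-1)^3\lambda_1^{n-g}$;
\item your Step~1 formula gives $a_1^2(\lambda_1-1)^2\lambda_1^{n-g-1}$, and your Step~2 display loses a further factor of $\lambda_1$;
\item the ratio of true to Step~1 is $\lambda_1(\lambda_1-1)$, which equals $1$ only for Fibonacci. So your two Step~1 slips cancel there, and a Fibonacci sanity check would not catch them.
\end{itemize}
Corrected, you get $P(g)=a_1(\lambda_1-1)^2\lambda_1^{-g}/C_{\mathrm{Lek}}$ for $g\ge L$, where the mean number of summands is $C_{\mathrm{Lek}}n+O(1)$. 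This reduces to $\varphi^{-g}$ for Fibonacci.

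Separately, the statement asserts convergence of the whole gap distribution, so you also need $\lim_n P_n(g)$ for each $g<L$. Normalization only fixes $\sum_{g<L}P(g)$, so saying these gaps ``only contribute finitely many constants'' is an assertion, not an argument. Conditioning on the block(s) meeting the window $[i,i+g]$ supplies these limits. Equivalently, use the finite automaton that recognizes legal strings. Either way, you get counts of the form $c_g\lambda_1^{n}(1+o(1))$ for $i$ in the bulk.

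Finally, to know the limit has total mass $1$, use the exact product formula. It gives $X_n(g)\ll n\lambda_1^{n-g}$ uniformly in $g$, hence $\sum_{g\ge G}P_n(g)\ll\lambda_1^{-G}$ uniformly in $n$.
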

	
	Recent work has pushed this further, showing that the number of gaps of a specific size is itself Gaussian.
	
	\begin{theorem}[Li and Miller, 2019 \cite{LiMiller2019Gaps}]
		For a PLRS with all positive coefficients, let $k_g(m)$ be the number of gaps of size $g$ in the decomposition of $m$. The distribution of $k_g(m)$ for integers $m \in [H_n, H_{n+1})$ converges to a Gaussian as $n \to \infty$.
	\end{theorem}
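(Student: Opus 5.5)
The statement is the Li--Miller theorem. Fix $g$ and let $k_g(m)$ count the gaps of length exactly $g$ in the legal decomposition of $m \in [H_n, H_{n+1})$. I would encode these counts in a bivariate generating function and apply a Lévy continuity (characteristic-function) argument, as in the Gaussianity proof of Miller and Wang for the number of summands. By uniqueness of legal decompositions for a PLRS, integers in $[H_n,H_{n+1})$ correspond bijectively to legal coefficient strings of length $n$ with leading block starting at the top index. So it suffices to study the polynomial
\[
F_n(y)=\sum_{m\in[H_n,H_{n+1})} y^{k_g(m)} .
\]

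\textbf{Step 1: build a finite automaton.} With all $c_i>0$, a legal string is a concatenation of blocks. Each block is $c_1,\dots,c_{s-1},a_s$ with $0\le a_s<c_s$, or the full block $c_1,\dots,c_L$. Between consecutive nonzero entries the index gap is either $1$ (inside a block) or determined by the zeros following a terminating digit. Add a state recording the distance since the last nonzero digit, capped at $g+1$. Each time a nonzero digit is placed exactly $g$ after the previous one, multiply by $y$. This gives a finite transfer matrix $M(y)$ whose entries are polynomials in $y$ with nonnegative coefficients, and $F_n(y)=u(y)^{T} M(y)^n v(y)$ for fixed boundary vectors. Equivalently, $\sum_n F_n(y)x^n = A(x,y)/B(x,y)$ is rational.

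\textbf{Step 2: analyse the dominant eigenvalue.} At $y=1$, $M(1)$ is the counting matrix of the PLRS, and its Perron root is the dominant root $\lambda_1$ of the characteristic polynomial. I would prove that $M(1)$ is primitive; the positivity of all coefficients is exactly what gives aperiodicity, since gaps of several lengths occur. Then $\lambda_1$ is simple and strictly dominant, and by analytic perturbation there is an analytic $\lambda(y)$ near $y=1$ with
\[
F_n(y)=C(y)\lambda(y)^n\bigl(1+O(\theta^n)\bigr), \qquad \theta<1,
\]
uniformly near $1$.

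\textbf{Step 3: moments and the CLT.} Set $y=e^{t}$. The mean is $\mu_n = n\,\lambda'(1)/\lambda(1)+O(1)$, and the variance is $\sigma_n^2 = n\,\sigma^2+O(1)$, where
\[
\sigma^2=\frac{d^2}{dt^2}\log\lambda(e^{t})\Big|_{t=0}.
\]
Expanding $\log F_n(e^{it/\sigma_n})/F_n(1)$ to second order, the third-order remainder is $O(n\sigma_n^{-3})\to0$. Hence the characteristic function of $(k_g-\mu_n)/\sigma_n$ tends to $e^{-t^2/2}$, and Lévy's continuity theorem gives the Gaussian limit.

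\textbf{Main obstacle: $\sigma^2>0$.} Nondegeneracy is where I expect the real work. If $\sigma^2=0$, then $\log\lambda(e^t)$ would be linear in $t$. That would force $k_g$ to be asymptotically a deterministic multiple of the length along all long legal strings, up to $O(1)$. I would refute this by exhibiting two legal cycles in the automaton of equal length but different numbers of $g$-gaps, which again uses all $c_i>0$. One option is a cycle that repeats the full block $c_1,\dots,c_L$ followed by the appropriate zeros. Another is a cycle that inserts a terminating block so that exactly one gap of size $g$ is created or destroyed. Concatenating these in varying proportions yields strings of length $n$ whose $g$-gap counts spread linearly in $n$. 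By the standard criterion (non-lattice cycle weights), this makes $\sigma^2>0$. If $g$ is smaller than the minimal achievable gap, $k_g\equiv0$ and the statement is vacuous or degenerate, so I would assume $g$ is an attainable gap length.
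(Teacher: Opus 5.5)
The paper gives no proof of this statement. It is quoted from Li and Miller as background, so there is no in-paper argument to compare with. Your route (weighted automaton, simple dominant eigenvalue $\lambda(y)$, quasi-power expansion, L\'evy continuity) is a standard and viable one. Your caveat about unattainable $g$ is genuinely needed; for Fibonacci, $k_1\equiv 0$.

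The automaton of Step 1 is wrong, though: you allow the full block $c_1,\dots,c_L$, which is not legal for a PLRS. Legality requires some $s\le L$ with $a_1=c_1,\dots,a_{s-1}=c_{s-1}$ and $a_s<c_s$. A run of $c$'s may end without a smaller digit only as a terminal truncated block $c_1,\dots,c_t$ with $t<L$. The full block is exactly the pattern the recurrence collapses, $c_1H_k+\cdots+c_LH_{k+1-L}=H_{k+1}$. Admitting it destroys uniqueness and the bijection with $[H_n,H_{n+1})$, and it changes the growth rate. For Fibonacci, the blocks $10$, $11$ and single $0$'s parse every binary string with leading $1$, giving $2^{n-1}$ strings instead of about $\bigl(\tfrac{1+\sqrt5}{2}\bigr)^n$. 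So your $M(1)$ would not have Perron root $\lambda_1$, contradicting Step 2.

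The repair is to drop that block. Use a ``free'' state (after a terminated block, where $0$ may be emitted indefinitely) and states ``matched $c_1,\dots,c_j$'' for $1\le j\le L-1$. Let the boundary vectors handle the leading positive digit and the terminal truncated block.

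The same error kills your first nondegeneracy cycle, which repeats the illegal full block, and the second cycle is too vague to check. Fortunately $\sigma^2>0$ is much easier than you fear. For a primitive weighted automaton, $\sigma^2=0$ iff the edge weight is a constant plus a coboundary, i.e.\ iff all cycles have the same average weight.

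After the repair:
\begin{itemize}
\item The state (free, distance $\ge g+1$) has a self-loop emitting $0$ with weight $y^0$, which is a cycle of average weight $0$.
\item Any cycle through an occurrence of a $g$-gap has strictly positive average weight.
\item Both cycles lie in the single recurrent class, since every path returns to the free state.
\end{itemize}
Hence $\sigma^2>0$ whenever a $g$-gap can occur at all. The relevant condition is unequal cycle averages, not ``non-lattice'' weights; the latter matters for local limit theorems.

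Two smaller points. That same zero self-loop is what makes $M(1)$ aperiodic for every PLRS, so positivity of all the $c_i$ is not what supplies aperiodicity. And if $g=0$ is to be covered, each nonzero digit $a$ must contribute $y^{a-1}$. Size-$0$ gaps come from multiplicities, which your ``distance since last nonzero digit'' rule never sees.
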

	
	This result provides a deeper layer of statistical regularity. Our paper shows that this regularity, like the Gaussianity of the total number of summands, also extends to the non-unique setting of ZLRRs.
	
	\section{Main Results and Generalizations}
	
	In this section, we present our main findings. We begin by establishing the key statistical properties—Gaussianity of summands and geometric decay of gaps—for the canonical greedy decomposition of the Lagonacci sequence. We then demonstrate that these results are not specific to this single example but extend to a broader class of ZLRRs, highlighting the robustness of these statistical laws.
	
	\subsection{Greedy decompositions for zero linear recurrence sequences}
	
	Let $\{Z_n\}_{n\ge0}$ be a \emph{zero linear recurrence sequence} (ZLRR), i.e.,
	a strictly increasing sequence of positive integers satisfying a linear
	recurrence relation of the form
	\[
	Z_{n+1} = c_1 Z_n + c_2 Z_{n-1} + \cdots + c_L Z_{n+1-L},
	\]
	with integer coefficients $c_i$ and $c_1=0$. Throughout, we assume
	$Z_0=1$ and $\lim_{n\to\infty} Z_n=\infty$. No uniqueness of representations
	is assumed.
	
	\begin{definition}[Greedy decomposition for a ZLRR]
		\label{def:GreedyZLRR}
		Let $m\ge1$. The \emph{greedy decomposition of $m$ with respect to $\{Z_n\}_{n\ge0}$}
		is obtained by repeatedly selecting the largest index $j$ such that
		$Z_j\le m$, subtracting $Z_j$, and iterating on the remainder. This yields a
		representation
		\[
		m = Z_{j_1}+Z_{j_2}+\cdots+Z_{j_k},
		\qquad
		j_1>j_2>\cdots>j_k,
		\]
		where $k=k(m)$ is called the number of greedy summands of $m$.
	\end{definition}

	For every integer $m\ge1$, the greedy algorithm terminates after finitely many
	steps and produces a unique canonical decomposition, even though $m$ may
	admit multiple legal representations as a sum of $\{Z_n\}_{n\ge0}$.
	
	Since $Z_0=1$, each step decreases the remainder by at least one, and indices
	strictly decrease. Hence,  at each step, the largest admissible $Z_j$ is uniquely
	determined, so no alternative choices are possible.

	\begin{lemma}[Prefix property for ZLRRs]
		\label{lem:PrefixZLRR}
		Let $N\ge0$ and $m\in[Z_N,Z_{N+1})$. Then the first greedy summand of $m$ is
		$Z_N$, and
		\[
		m = Z_N + m',
		\qquad
		0 \le m' < Z_{N+1}-Z_N.
		\]
		Moreover, the greedy decomposition of $m'$ uses only terms $Z_j$ with
		$j\le N-1$.
	\end{lemma}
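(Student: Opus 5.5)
The plan is to read the statement directly off the definition of the greedy algorithm (Definition~\ref{def:GreedyZLRR}), using only two facts: the sequence is increasing, and consecutive terms satisfy a ratio bound $Z_{N+1}\le 2Z_N$. The three claims will be proved in order: the first greedy summand, the bound on the remainder $m'$, and the restriction on the indices used for $m'$.

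\emph{Step 1 (first summand).} Since $m\ge Z_N$ and $m<Z_{N+1}\le Z_j$ for all $j\ge N+1$, the largest index $j$ with $Z_j\le m$ is $j=N$. Hence the greedy algorithm first selects $Z_N$. If repeated values occur (for the Lagonacci sequence, $Z_2=Z_3=3$), I would adopt the convention already implicit in Definition~\ref{def:GreedyZLRR}: $N$ denotes the largest index with $Z_N\le m$. This makes the interval $[Z_N,Z_{N+1})$ nonempty and the choice well defined.

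\emph{Step 2 (remainder bound).} Setting $m'=m-Z_N$, we get $0\le m'$ from $m\ge Z_N$ and $m'<Z_{N+1}-Z_N$ from $m<Z_{N+1}$. This is immediate.

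\emph{Step 3 (indices of $m'$).} This is the only step with content. It suffices to show $m'<Z_N$: then every $Z_j\le m'$ satisfies $j\le N-1$, because $Z_j\ge Z_N>m'$ for $j\ge N$. Applying Step 1 inductively to the successive remainders then shows that every later greedy summand also has index at most $N-1$. By Step 2, $m'<Z_N$ follows from
\[
Z_{N+1}-Z_N\le Z_N, \qquad\text{i.e.}\qquad Z_{N+1}\le 2Z_N .
\]
For the Lagonacci sequence I would verify this from the recurrence and monotonicity: $Z_{N+1}=Z_{N-1}+Z_{N-2}\le 2Z_{N-1}\le 2Z_N$. The small indices are checked by hand: $Z_1=2\le 2Z_0$ and $Z_2=3\le 2Z_1$.

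\emph{Main obstacle.} I expect this ratio inequality to be the main difficulty for a general ZLRR. It does not follow from $c_1=0$ alone; for instance, a large $c_2$ would force $Z_{N+1}/Z_N$ above $2$. In that case $m'$ could exceed $Z_N$, and the greedy algorithm would reuse $Z_N$, contradicting the strictly decreasing indices asserted in Definition~\ref{def:GreedyZLRR}. For general ZLRRs I would therefore first try to state the hypothesis $Z_{n+1}\le 2Z_n$ for all $n$ (equivalently, dominant root at most $2$ together with suitable initial terms) explicitly in the lemma. Under that hypothesis, Steps 1--3 go through verbatim.
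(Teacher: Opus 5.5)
Your proof is correct and follows the same route as the paper's: the interval $[Z_N,Z_{N+1})$ forces $Z_N$ as the first summand, and the remainder is then handled by smaller terms. The difference is that you justify the one step the paper only asserts.

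The paper's proof ends with ``all subsequent choices must involve strictly smaller indices.'' That is exactly the claim $m'<Z_N$, and it needs your inequality $Z_{N+1}\le 2Z_N$. Your check of this inequality for the Lagonacci sequence is correct: $Z_1\le 2Z_0$, $Z_2\le 2Z_1$, and $Z_{N+1}=Z_{N-1}+Z_{N-2}\le 2Z_{N-1}\le 2Z_N$ for $N\ge 2$.

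Your caveat about general ZLRRs is also right, and your hypothesis is sharp. If $Z_{N+1}>2Z_N$ for some $N$, then $m=2Z_N\in[Z_N,Z_{N+1})$ has greedy decomposition $Z_N+Z_N$. For example, $Z_n=3^n$ satisfies $Z_{n+1}=9Z_{n-1}$ with $c_1=0$ and $Z_0=1$. Every $m=2\cdot 3^N$ then violates the lemma, and also the strictly decreasing indices claimed in Definition~\ref{def:GreedyZLRR}. So the lemma, as stated for arbitrary ZLRRs, really does need $Z_{n+1}\le 2Z_n$. Keep the hypothesis in that form: your parenthetical ``equivalently, dominant root at most $2$'' is only a heuristic, not an equivalent condition.

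One simplification: Step 1 needs no convention for repeated values. Whenever $[Z_N,Z_{N+1})$ is nonempty, monotonicity already makes $N$ the largest index with $Z_j\le m$. For the Lagonacci case $N=2$, where $Z_2=Z_3=3$, the statement is simply vacuous.
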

	
	\begin{proof}
		Since $Z_N\le m<Z_{N+1}$, the greedy choice forces selection of $Z_N$. All
		subsequent choices must involve strictly smaller indices.
	\end{proof}
	
	\begin{lemma}[Additivity of greedy statistics]
		\label{lem:AdditivityZLRR}
		Let $f(m)$ be a statistic additive under concatenation of greedy
		decompositions (e.g., number of summands or number of gaps). If
		$m=Z_N+m'$ as in Lemma~\ref{lem:PrefixZLRR}, then
		\[
		f(m)=f(Z_N)+f(m').
		\]
	\end{lemma}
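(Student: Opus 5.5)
The plan is to make precise what ``additive under concatenation'' means and then read the identity off Lemma~\ref{lem:PrefixZLRR}.

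First, note what the greedy decomposition of $Z_N$ itself is. Since $Z_N\le Z_N<Z_{N+1}$, the first greedy choice is $Z_N$. The remainder is then $0$, so the decomposition of $Z_N$ is the single summand $Z_N$.

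Next, I would show that the greedy decomposition of $m$ is the concatenation of the decomposition of $Z_N$ with the greedy decomposition of $m'$. By Lemma~\ref{lem:PrefixZLRR}, the first step of the algorithm on $m$ selects $Z_N$ and leaves the remainder $m'$. The algorithm is deterministic and memoryless: at each stage it only looks at the current remainder, as noted after Definition~\ref{def:GreedyZLRR}. So the steps after the first one are exactly the greedy algorithm run on $m'$. If $m'=Z_{j_2}+\cdots+Z_{j_k}$ is its greedy decomposition, then
\[
m=Z_N+Z_{j_2}+\cdots+Z_{j_k}.
\]
Again by Lemma~\ref{lem:PrefixZLRR}, all the $j_i$ satisfy $j_i\le N-1$, so the index list of $m$ is $(N,j_2,\dots,j_k)$, strictly decreasing. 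When $m'=0$, the list for $m'$ is empty, and the claim reduces to the previous paragraph provided $f(\emptyset)=0$.

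Finally, I would apply the hypothesis on $f$. I take ``additive under concatenation'' to mean that $f$ is a function of the index list, and that $f(A\ast B)=f(A)+f(B)$ whenever every index in $A$ exceeds every index in $B$. With $A=(N)$ and $B=(j_2,\dots,j_k)$, this gives $f(m)=f(Z_N)+f(m')$.

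For the number of summands this is immediate: $k(m)=1+k(m')$.

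The main obstacle is the example of the number of gaps. Concatenation creates one new gap, $N-j_2$, between the two blocks. Strictly speaking, gap counts are therefore additive only up to this boundary term. I would handle this in one of two ways:
\begin{itemize}
\item adopt the convention, standard in the gap literature, that the gap statistic attaches to each summand the gap to the next smaller summand, with the last gap measured to index $0$ or $-1$; or
\item state the lemma for that convention, so that the boundary gap is counted in the $Z_N$ term.
\end{itemize}
I would fix this convention explicitly before giving the one-line deduction above.
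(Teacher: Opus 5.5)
Your proposal is correct and takes the same route as the paper: the greedy algorithm is memoryless, so its run on $m$ is $Z_N$ followed by the greedy run on $m'$, and additivity of $f$ over this concatenation gives the identity. You also make explicit what the paper's one-line proof leaves implicit, namely that $Z_N$'s own decomposition is $(N)$ because $Z_N<Z_{N+1}$, the index separation from Lemma~\ref{lem:PrefixZLRR}, and $f(\emptyset)=0$.

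Your caveat about gaps is also right. With the gap definition used in Theorem~\ref{thm:Gaps}, one has $\#\mathrm{gaps}(m)=\#\mathrm{gaps}(Z_N)+\#\mathrm{gaps}(m')+1$ when $m'>0$, so the paper's parenthetical example is off by the boundary gap. Your convention repairs the total count, since it becomes $k(m)$. It would not, however, make finer statistics such as the number of gaps of a fixed length additive.
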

	
	\begin{proof}
		The greedy decomposition of $m$ is obtained by appending the greedy
		decomposition of $m'$ to the initial summand $Z_N$, and additive statistics
		respect this concatenation.
	\end{proof}
	
	\begin{lemma}[Recurrence-induced local constraints]
		\label{lem:LocalConstraintsZLRR}
		Let $\{Z_n\}_{n\ge0}$ satisfy a linear recurrence of length $L$. Then the greedy
		decomposition induces finitely many local constraints on admissible index
		patterns. In particular, any gap of length at least $L$ between consecutive
		indices creates a renewal point: greedy choices on either side of the gap are
		independent.
	\end{lemma}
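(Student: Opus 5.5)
The plan is to characterize the greedy decompositions by a finite list of forbidden index patterns. The renewal statement then follows, because at a long gap the constraint set splits into two parts that do not interact.

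\textbf{Step 1 (greedy remainder bounds).} By Lemma~\ref{lem:PrefixZLRR}, if $Z_{j_1}$ is the current greedy summand then the remainder $m'$ satisfies $m' < Z_{j_1+1}-Z_{j_1}$. Using the recurrence $Z_{n+1}=\sum_{i=2}^{L} c_i Z_{n+1-i}$ (recall $c_1=0$), I would write $Z_{j_1+1}-Z_{j_1}$ as a fixed linear combination of $Z_{j_1-1},\dots,Z_{j_1+1-L}$. For the Lagonacci sequence this gives $Z_{N+1}-Z_N=Z_{N-1}+Z_{N-2}-Z_N=Z_{N-2}-Z_{N-4}$, since $Z_N=Z_{N-2}+Z_{N-3}$ and $Z_{N-1}-Z_{N-3}=Z_{N-4}$. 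This bound on the remainder shows two things. First, the next index $j_2$ is at most $j_1-c$ for an explicit $c$ depending only on the recurrence. Second, whether a pair or triple of consecutive indices is admissible depends only on index differences lying within a window of width $L$. Iterating, I would define the finite set $\mathcal F$ of forbidden patterns. This set consists of tuples of index differences, each of size less than $L$, that would make some partial sum exceed the corresponding $Z_{j+1}$. I would then show that a finite decreasing index sequence is a greedy decomposition exactly when it contains no pattern from $\mathcal F$. The forward direction uses the remainder bound above. The converse needs a telescoping estimate: a sequence avoiding $\mathcal F$ has tail sum $\sum_{i\ge r} Z_{j_i} < Z_{j_{r-1}+1}-Z_{j_{r-1}}$, and this is proved by induction from the bottom index. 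This gives the first assertion, that there are finitely many local constraints.

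\textbf{Step 2 (renewal at long gaps).} Suppose $j_r - j_{r+1}\ge L$. I would show that the admissibility of the lower block $(j_{r+1},\dots,j_k)$ does not depend on the upper block $(j_1,\dots,j_r)$, and conversely. Every pattern in $\mathcal F$ spans differences smaller than $L$, so no forbidden pattern can straddle a gap of length at least $L$. The characterization from Step 1 therefore factorizes: the whole sequence is greedy if and only if each of the two blocks is greedy on its own. As a consistency check, the tail bound from Step 1 gives $\sum_{i>r} Z_{j_i} < Z_{j_{r+1}+1} \le Z_{j_r-L+1}$. Since the sequence is increasing, this is at most $Z_{j_r+1}-Z_{j_r}$, so the upper block imposes no further constraint on the lower one. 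Counting integers in $[Z_N,Z_{N+1})$ then shows that the number of greedy strings factorizes as a product over the two blocks. This is the independence needed for a transfer-matrix or generating-function argument. Combined with Lemma~\ref{lem:AdditivityZLRR}, it also shows that additive statistics split as sums over the blocks.

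\textbf{Main obstacle.} The hard part is the converse in Step 1, namely proving that the forbidden patterns have bounded span. If the coefficients $c_i$ are allowed to be negative, the sequence $Z_{N+1}-Z_N$ need not be monotone. In that case the forbidden set could have unbounded span, and the claim that differences below $L$ suffice could fail. I would first work out the positive-coefficient ZLRR case, where $Z_{n+1}-Z_n\ge Z_{n-L+1}$ holds. If that inequality is not available in general, I would add it as an explicit hypothesis, check it directly for the Lagonacci sequence (where $Z_{N-2}-Z_{N-4}=Z_{N-7}+\dots\ge Z_{N-5}$), and weaken ``length at least $L$'' to ``length at least some constant $L'$ depending on the recurrence''.
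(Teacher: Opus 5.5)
\textbf{Verdict: genuine gap.} The claim that carries all the weight in Step~2 is false: that every forbidden pattern spans index differences smaller than $L$. Equivalently, your ``consistency check'' and your claimed positive-coefficient case assert $Z_{n+1}-Z_n\ge Z_{n-L+1}$. This already fails for the Lagonacci sequence, and your verification rests on an arithmetic slip. The correct computation is $Z_{N+1}-Z_N=Z_{N-1}+Z_{N-2}-Z_N=Z_{N-1}-Z_{N-3}=Z_{N-4}$, not $Z_{N-2}-Z_{N-4}$. Even your expression, which equals $Z_{N-5}$, is below the required $Z_{N-L+1}=Z_{N-2}$. Equivalently, $x^5-x^4-1=(x^3-x-1)(x^2-x+1)$, so $Z_{n+1}=Z_n+Z_{n-4}$ for $n\ge4$, and $Z_{n-4}<Z_{n-2}$.

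Your own remainder bound therefore forces consecutive greedy indices to differ by at least $5$. The forbidden differences are $1,2,3,4$, which includes $3=L$ and $4$. Concretely, take $Z_8,\dots,Z_{13}=14,19,25,33,44,58$. The blocks $(12)$ and $(9)$ are each greedy, but $Z_{12}+Z_9=63\ge Z_{13}$, and also $Z_{12}+Z_8=58=Z_{13}$. So ``the whole sequence is greedy iff each block is'' fails at a gap of length exactly $L$. In the precise form you formulate, the lemma is false with threshold $L$.

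What is true, for indices past the initial irregularity $Z_2=Z_3$, is that a decreasing index sequence is greedy iff all consecutive gaps are at least $5$. The converse direction is exactly your bottom-up induction, using $Z_j+Z_{j'+1}\le Z_j+Z_{j-4}=Z_{j+1}$ when $j-j'\ge 5$.

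Your fallback of replacing $L$ by a recurrence-dependent threshold is the right repair. However, the check you proposed would have ``confirmed'' a false inequality, and the correct threshold is not controlled by the length of the recurrence. Here the relevant comparison is between $\lambda-1$ and $\lambda^{-(g-1)}$. Since $\lambda-1=\lambda^{-4}$ (i.e.\ $1=\lambda^{-1}+\lambda^{-5}$ is the finite greedy expansion of $1$ in base $\lambda$), the threshold is $5>3$.

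Finiteness of the constraint set is likewise an arithmetic property of this kind, not a consequence of bounded recurrence length. So the first assertion also needs an extra hypothesis, as you suspected. For example, take $U_0=1$, $U_1=3$, $U_{n+1}=3U_n-U_{n-1}$; this sequence also satisfies the ZLRR $U_{n+1}=7U_{n-1}-U_{n-3}$. For it, the digit words $2\,1^k\,2$ are minimal forbidden patterns for every $k$.

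For comparison, the paper's own proof is only the heuristic that ``no recurrence can connect summands across the gap,'' which does not engage with this issue at all. Your Step~1 framework, run with the correct identity $Z_{n+1}-Z_n=Z_{n-4}$, is what a proof of the corrected statement would look like.
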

	
	\begin{proof}
		The recurrence relation links only bounded-length index patterns. Once the
		gap exceeds the recurrence length, no recurrence can connect summands across
		the gap, and greedy choices decouple.
	\end{proof}

	\subsection{Lévy's Continuity Theorem}
	
	Lévy’s continuity theorem (also known as the convergence theorem) is a fundamental result in probability theory that establishes a one-to-one correspondence between the \textit{convergence in distribution} of random variables and the \textit{pointwise convergence} of their characteristic functions.
	
	Named after French mathematician Paul Lévy, the theorem is a primary tool for proving the \textit{Central Limit Theorem} (CLT). We  refer the reader to the original work~\cite{Levy1925} or \cite{FristedtGray1996}  for a rigorous proof.
	
	\begin{theorem}
		\label{Levy:theorem}
		Let $\{X_n\}$ be a sequence of random variables with corresponding characteristic functions $\{\varphi_n(t)\}$, where $\varphi_n(t) = \mathbb{E}[e^{itX_n}]$.
		
		\begin{enumerate}
			\item   If $X_n$ converges in distribution to a random variable $X$ ($X_n \xrightarrow{d} X$), then the sequence of characteristic functions $\varphi_n(t)$ converges pointwise for every $t \in \mathbb{R}$ to the characteristic function $\varphi(t)$ of $X$.
			\item   If the sequence $\varphi_n(t)$ converges pointwise to some limit function $\varphi(t)$ for all $t$, then the following are equivalent:
			\begin{itemize}
				\item[(i)] $\varphi(t)$ is the characteristic function of some random variable $X$.
				\item[(ii)] $\varphi(t)$ is continuous at $t = 0$.
				\item[(iii)] The sequence $\{X_n\}$ is tight, meaning it does not ``escape to infinity'' in probability.
				\item[(iv)] $X_n$ converges in distribution to $X$.
			\end{itemize}
		\end{enumerate}
	\end{theorem}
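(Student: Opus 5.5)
Since $x\mapsto e^{itx}$ is bounded and continuous, part (1) of Theorem~\ref{Levy:theorem} is immediate from the definition of convergence in distribution: if $X_n\xrightarrow{d}X$, then $\mathbb{E}[g(X_n)]\to\mathbb{E}[g(X)]$ for every bounded continuous $g$. Applying this to $g(x)=\cos(tx)$ and $g(x)=\sin(tx)$ separately gives $\varphi_n(t)\to\varphi(t)$ for every $t$. For part (2), assume $\varphi_n\to\varphi$ pointwise. I would prove the cycle of implications (iv)$\Rightarrow$(i)$\Rightarrow$(ii)$\Rightarrow$(iii)$\Rightarrow$(iv).

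The first two implications are quick. For (iv)$\Rightarrow$(i), part (1) shows that $\varphi_n\to\varphi_X$, so $\varphi=\varphi_X$ is a characteristic function. For (i)$\Rightarrow$(ii), every characteristic function is continuous: by dominated convergence with dominating function $2$, we get $\mathbb{E}[e^{ihX}]\to 1$ as $h\to0$.

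The key step is (ii)$\Rightarrow$(iii), and I expect it to be the main obstacle. I would first establish the truncation inequality: for every $u>0$ and every random variable $Y$,
\[
\mathbb{P}\bigl(|Y|\ge 2/u\bigr)\;\le\;\frac{1}{u}\int_{-u}^{u}\bigl(1-\varphi_Y(t)\bigr)\,dt .
\]
To prove it, apply Fubini to get
\[
\frac{1}{u}\int_{-u}^{u}\bigl(1-\varphi_Y(t)\bigr)\,dt=2\,\mathbb{E}\Bigl[1-\frac{\sin(uY)}{uY}\Bigr].
\]
The integrand $1-\frac{\sin(uY)}{uY}$ is always nonnegative, and it is at least $1/2$ whenever $|uY|\ge2$, because $|\sin x|\le 1\le |x|/2$ there. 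This gives the inequality.

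Now apply the inequality to $Y=X_n$. Each $\varphi_n$ is bounded by $1$ in modulus, so dominated convergence gives
\[
\frac1u\int_{-u}^{u}\bigl(1-\varphi_n(t)\bigr)\,dt\;\longrightarrow\;\frac1u\int_{-u}^{u}\bigl(1-\varphi(t)\bigr)\,dt \qquad (n\to\infty).
\]
By continuity of $\varphi$ at $0$ and $\varphi(0)=\lim_n\varphi_n(0)=1$, the right-hand side is at most $2\varepsilon$ once $u$ is small enough. Hence $\mathbb{P}(|X_n|\ge2/u)\le3\varepsilon$ for all $n\ge n_0$. The finitely many remaining $X_n$ with $n<n_0$ are handled by enlarging the cutoff. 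This proves tightness.

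For (iii)$\Rightarrow$(iv), I would use Helly's selection theorem, which in its tight form is Prokhorov's theorem. It says every subsequence of $\{X_n\}$ has a further subsequence converging in distribution to some genuine random variable $Y$, and tightness is exactly what guarantees that no mass escapes to infinity. By part (1), $\varphi_Y=\lim\varphi_{n_k}=\varphi$ along that subsequence. By the uniqueness theorem, which follows from the inversion formula, all such subsequential limits $Y$ have the same law $X$, the one with characteristic function $\varphi$. Since every subsequence has a further subsequence converging to this same $X$, the whole sequence satisfies $X_n\xrightarrow{d}X$. This last argument also gives (i) as a byproduct, so the cycle closes.
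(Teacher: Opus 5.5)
The paper does not prove this statement. It quotes L\'evy's continuity theorem as a classical tool and refers the reader to L\'evy (1925) and Fristedt--Gray (1996), so there is no in-paper argument to compare against.

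Your proposal is correct and is the standard textbook proof found in such references:
\begin{itemize}
\item[(1)] Part (1) follows by testing convergence in distribution against the bounded continuous functions $\cos(tx)$ and $\sin(tx)$.
\item[(2)] (ii)$\Rightarrow$(iii) follows from the truncation inequality $\mathbb{P}(|Y|\ge 2/u)\le \frac{1}{u}\int_{-u}^{u}(1-\varphi_Y(t))\,dt$, combined with bounded convergence on $[-u,u]$.
\item[(3)] (iii)$\Rightarrow$(iv) follows from Helly--Prokhorov, the uniqueness theorem, and the subsequence principle.
\end{itemize}

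You also implicitly adopt the right reading of (iv): convergence in distribution to \emph{some} random variable, whose characteristic function is then forced to equal $\varphi$. This is what makes (iv)$\Rightarrow$(i) meaningful rather than circular. Compared with the paper's bare citation, your version makes the section self-contained.
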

	
	It is often easier to compute and show the convergence of characteristic functions than it is to work directly with cumulative distribution functions.
	
	% Most proofs of the CLT use Lévy's theorem by showing that the characteristic function of a normalized sum of i.i.d. variables converges to $e^{-t^2/2}$, which is the characteristic function of the Standard Normal distribution.

	\begin{theorem}
		\label{thm:GaussianGreedy}	
		For each integer $N\ge0$, let $k(m)$ denote the number of summands in the
		greedy decomposition of $m$ with respect to the Legonacci sequence $\{Z_n\}_{n\ge0}$.
		Let $K_N$ be the random variable obtained by choosing $m$ uniformly from
		$[0,Z_N)$ and setting $K_N=k(m)$. Then, as $N\to\infty$, there exist $C_1$ and $C_2$ such that 
		the mean and variance satisfy
		\[
		\mathbb{E}[K_N]=C_1N+O(1), \qquad \mathrm{Var}(K_N)=C_2N+O(1),
		\]
		for explicit constants $C_1$ and $C_2$.
		%	$$C_1= \frac{1}{\lambda  \left(3 \lambda^{2}-1\right)}, \ \ C_2= \frac{\lambda^{2} \mathit{c1}}{\left(\lambda -1\right)^{2}}-\mathit{c1}^{2}.$$
		Furthermore, the normalized random variables
		\[
		X_N=\frac{K_N-\mathbb{E}[K_N]}{\sqrt{\mathrm{Var}(K_N)}}
		\]
		converge in distribution to the standard normal distribution $ \mathcal{N}(0,1)$.
	\end{theorem}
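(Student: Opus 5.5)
The proof will rest on a recursion for the generating function of $k(m)$. The first step is to make the prefix structure of Lemma~\ref{lem:PrefixZLRR} explicit for the Lagonacci sequence. Applying $Z_{n+1}=Z_{n-1}+Z_{n-2}$ twice gives
\[
Z_{N+1}-Z_N=(Z_{N-1}+Z_{N-2})-(Z_{N-2}+Z_{N-3})=Z_{N-1}-Z_{N-3}=Z_{N-4}.
\]
So by Lemma~\ref{lem:PrefixZLRR} every $m\in[Z_N,Z_{N+1})$ is $Z_N+m'$ with $0\le m'<Z_{N-4}$. Conversely, every such $m'$ arises, and its greedy decomposition uses only indices below $N$, since $m'<Z_{N-4}\le Z_N$. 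By Lemma~\ref{lem:AdditivityZLRR}, $k(m)=1+k(m')$.

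Setting $G_N(x)=\sum_{0\le m<Z_N}x^{k(m)}$, with $k(0)=0$, I obtain for all $N$ beyond a fixed threshold the linear recurrence
\[
G_{N+1}(x)=G_N(x)+x\,G_{N-4}(x).
\]
The first few indices need care because they are not strictly increasing: for example $Z_3=Z_1+Z_0=3=Z_2$. There I would check the initial terms by hand; they only change the constants $a(x)$ below and contribute $O(1)$ to the mean and variance.

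Next I study the characteristic equation $y^5-y^4-x=0$. At $x=1$ it factors as
\[
y^5-y^4-1=(y^3-y-1)(y^2-y+1).
\]
Its largest root is therefore the plastic ratio $\lambda_1$, which is simple. The remaining roots are $\lambda_2,\bar\lambda_2$, of modulus $\lambda_1^{-1/2}<1$, and the two primitive sixth roots of unity, of modulus $1$. All of these are strictly smaller than $\lambda_1$ in modulus. By the implicit function theorem and continuity of roots, for complex $x$ in a small neighbourhood $U$ of $1$ there is an analytic simple root $\lambda(x)$ with $\lambda(1)=\lambda_1$. The other four roots stay in a disc $|y|\le\rho<|\lambda(x)|$ uniformly on $U$. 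Solving the recurrence then gives
\[
G_N(x)=a(x)\lambda(x)^N\bigl(1+O(\theta^N)\bigr)
\]
uniformly on $U$, with $a$ analytic, $a(1)\neq0$, and some $\theta<1$. Differentiating in $s$ with $x=e^s$ yields
\[
\mathbb{E}[K_N]=N\,\frac{\lambda'(1)}{\lambda(1)}+O(1),
\qquad
\mathrm{Var}(K_N)=N\,\frac{d^2}{ds^2}\log\lambda(e^s)\Big|_{s=0}+O(1).
\]
These are the constants $C_1$ and $C_2$. Implicit differentiation of $\lambda^5-\lambda^4=x$ gives explicit expressions for them in terms of $\lambda_1$.

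For the Gaussian limit I would use Theorem~\ref{Levy:theorem}. Write $\mu_N=\mathbb{E}[K_N]$ and $\sigma_N^2=\mathrm{Var}(K_N)$. The characteristic function of $X_N$ is
\[
\varphi_N(t)=e^{-it\mu_N/\sigma_N}\,\frac{G_N(e^{it/\sigma_N})}{G_N(1)}.
\]
Since $\sigma_N\asymp\sqrt N$, the point $e^{it/\sigma_N}$ lies in $U$ for large $N$, so the asymptotic above applies. Taylor-expanding $\log\lambda(e^{s})$ to third order at $s=0$ with $s=it/\sigma_N$, the linear term cancels against $\mu_N$. The quadratic term gives $-t^2/2$. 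The cubic remainder is $O(N\sigma_N^{-3})=O(N^{-1/2})$, and the factor $a(e^s)/a(1)\to1$. Hence $\varphi_N(t)\to e^{-t^2/2}$ for every $t$, and part (2) of Theorem~\ref{Levy:theorem} gives $X_N\xrightarrow{d}\mathcal N(0,1)$.

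I expect the main obstacle to be twofold. The first part is verifying $C_2>0$, i.e. that $\log\lambda(e^s)$ is not linear in $s$; otherwise the normalization degenerates. I would first attempt this by direct implicit differentiation of $\lambda^5-\lambda^4=e^s$ and numerical evaluation at $\lambda_1$. The fallback is an argument that $k(m)$ takes non-constant values on the top block $[Z_{N-1},Z_N)$. The second part is handling the initial indices where the sequence is not strictly increasing. These must be shown to affect only the constant $a(x)$, and not the validity of the recurrence for $G_N$ at all sufficiently large $N$.
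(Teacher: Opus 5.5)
Your proof is correct. It shares the paper's skeleton: split $[0,Z_{N+1})$ into blocks, get a recursion for $G_N$, differentiate at $1$, then apply L\'evy's theorem. But it departs from the paper at exactly the two points where the paper's argument fails.

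First, the paper's recursion $G_{N+1}(y)=G_N(y)+y\bigl(G_{N-1}(y)+G_{N-2}(y)\bigr)$ is false: at $y=1$ it reads $Z_{N+1}=Z_N+Z_{N+1}$. The top block $[Z_N,Z_{N+1})$ has length $Z_{N-4}$, as you derive. So the correct relation is your $G_{N+1}=G_N+xG_{N-4}$ for $N\ge 4$. It matches direct enumeration, e.g.\ $G_{10}=G_9+xG_5=1+9x+13x^2+2x^3$. The governing equation is therefore $y^5-y^4=x$.

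Second, the paper obtains Gaussianity by summing the cumulant series of $\log\varphi_N$ term by term. It first asserts, without proof, that every $\kappa_j$ is $O(N)$. That step needs control uniform in $j$ and $N$. Your expansion $G_N(x)=a(x)\lambda(x)^N(1+O(\theta^N))$, uniform on a complex neighbourhood of $x=1$, supplies exactly this; it is the quasi-power method. The paper's route buys only brevity; yours buys a complete argument with correct explicit constants.

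As a consequence, your constants will not agree with the ones printed in the paper. Implicit differentiation of $\lambda^5-\lambda^4=e^s$ gives $C_1=(\lambda_1-1)/(5\lambda_1-4)\approx 0.1238$ and $C_2=\lambda_1(\lambda_1-1)/(5\lambda_1-4)^3\approx 0.0238$. The paper instead states $0.2887$ and $0.0796$. Small cases support your values. From $G_{14}'(1)=164$, $Z_{14}=77$, $G_{20}'(1)=1195$, $Z_{20}=416$, one gets $(1195/416-164/77)/6\approx 0.124$.

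This also settles both obstacles you flagged. $C_2>0$ is explicit from the formula, so no fallback argument is needed. The repeated value $Z_2=Z_3$ enters only through the initial polynomials $G_0,\dots,G_4$, and the recursion holds exactly for every $N\ge 4$.
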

	
	\begin{proof}
		Although zero linear recurrence relations admit multiple legal
		decompositions, the greedy algorithm produces a unique canonical
		representation for every integer. Hence, the number  $k(m)$ is well defined.
		
		Let $m\in[Z_N,Z_{N+1})$. The greedy algorithm must select $Z_N$ as the
		largest summand, so $m=Z_N+m'$ with $0\le m'<Z_{N+1}-Z_N$.
		Using the recurrence $Z_{N+1}=Z_{N-1}+Z_{N-2}$, the greedy constraint
		forces the next summand to be strictly smaller than $Z_{N-1}$.
		Consequently, by Lemma~\ref{lem:PrefixZLRR}, 
		the remainder $m'$ has a greedy decomposition supported
		entirely on indices at most $N-1$, and hence
		$
		k(m)=1+k(m').
		$
		This recursive structure is the fundamental combinatorial input.
		
		Let's define the generating function
		\[
		G_N(y)=\sum_{m=0}^{Z_N-1}y^{k(m)}.
		\]
		Then, we have  $G_N(1)=Z_N$ and
		\[
		\mathbb{E}[K_N]=\frac{G_N'(1)}{Z_N},
		\qquad
		\mathrm{Var}(K_N)=\frac{G_N''(1)}{Z_N}-\mathbb{E}[K_N]^2.
		\]
		Partitioning $[0,Z_{N+1})$ into $[0,Z_N)$ and $[Z_N,Z_{N+1})$ and using
		$k(m)=1+k(m')$ for $m\ge Z_N$, we obtain
		\[
		G_{N+1}(y)=G_N(y)+y\sum_{m'=0}^{Z_{N+1}-Z_N-1}y^{k(m')}.
		\]
		The admissible remainders correspond exactly to greedy decompositions
		using indices $\le N-1$.  Hence, by Lemma~\ref{lem:AdditivityZLRR}, this  yields the recurrence
		\[
		G_{N+1}(y)=G_N(y)+y\bigl(G_{N-1}(y)+G_{N-2}(y)\bigr).
		\]
		Differentiating and evaluating at $y=1$ gives
		\[
		G_{N+1}'(1)=G_N'(1)+G_{N-1}(1)+G_{N-2}(1)+G_{N-1}'(1)+G_{N-2}'(1).
		\]
		Dividing by $Z_{N+1}$ and using the fact $Z_N=   a\,\lambda_1^N + O(|\lambda_2|^N),$ as in Definition~\ref{Lagon}  
		shows that
		\[
		\mathbb{E}[K_{N+1}]=\mathbb{E}[K_N]+C_1+O(\lambda_1^{-N}),
		\]
		and hence $\mathbb{E}[K_N]=C_1N+O(1)$, with 
		$C_1= \frac{1}{\lambda_1  \left(3 \lambda_1^{2}-1\right)}\sim 0.288675.$
		An analogous computation with 
		second derivatives yields
		$\mathrm{Var}(K_N)=C_2 N+O(1)$, where 
		$\ C_2= \frac{\lambda_1^{2} \mathit{C_1}}{\left(\lambda_1 -1\right)^{2}}-\mathit{C_1}^{2}\sim 0.079578.$
		Consequently,
		\[
		\mathbb{E}[K_N]\sim 0.288675 \,N,
		\qquad
		\mathrm{Var}(K_N)\sim 0.079578\,N.
		\]

		To prove Gaussian convergence,   we define the normalized variable as:
		\[
		X_N = \frac{K_N - \mathbb{E}[K_N]}{\sqrt{\text{Var}(K_N)}}
		\]
		The characteristic function of $X_N$ is given by:
		\[
		\varphi_N(t) = \mathbb{E}\left[e^{itX_N}\right] = \exp\left( -\frac{it\mathbb{E}[K_N]}{\sqrt{\text{Var}(K_N)}} \right) \mathbb{E}\left[ \exp\left( \frac{itK_N}{\sqrt{\text{Var}(K_N)}} \right) \right]
		\]
		
		The recurrence relation for the generating function $G_N(y) = \sum_{m=0}^{Z_N-1} y^{k(m)}$ implies that $G_N(y)$ is analytic in a neighborhood of $y=1$. This analyticity allows us to express the logarithm of the characteristic function in terms of the cumulants $\kappa_j$ of $K_N$:
		\[
		\log \varphi_N(t) = \sum_{j=1}^{\infty} \frac{\kappa_j}{j!} \left( \frac{it}{\sqrt{\text{Var}(K_N)}} \right)^j - \frac{it\mathbb{E}[K_N]}{\sqrt{\text{Var}(K_N)}}
		\]
		Since $\kappa_1 = \mathbb{E}[K_N]$ and $\kappa_2 = \text{Var}(K_N)$, the first-order term cancels out, yielding:
		\[
		\log \varphi_N(t) = -\frac{t^2}{2} + \sum_{j=3}^{\infty} \frac{\kappa_j}{j!} \frac{(it)^j}{\text{Var}(K_N)^{j/2}}
		\]
		The structure of the recurrence for $G_N(y)$ ensures that all cumulants $\kappa_j$ grow linearly with $N$. For $j \geq 3$, the terms in the summation behave as:
		\[
		\frac{\kappa_j}{\text{Var}(K_N)^{j/2}} = \frac{O(N)}{O(N^{j/2})} = O(N^{1-j/2})
		\]
		As $N \to \infty$, $N^{1-j/2} \to 0$ for all $j \geq 3$. Thus, for any fixed $t \in \mathbb{R}$:
		\[
		\lim_{N \to \infty} \log \varphi_N(t) = -\frac{t^2}{2} \implies \lim_{N \to \infty} \varphi_N(t) = e^{-t^2/2}
		\]
		The limit function $\varphi(t) = e^{-t^2/2}$ is recognized as the characteristic function of the standard normal distribution $\mathcal{N}(0,1)$. According to Lévy's Continuity Theorem~\ref{Levy:theorem}, since the sequence of characteristic functions $\varphi_N(t)$ converges pointwise to $\varphi(t)$, and $\varphi(t)$ is continuous at $t=0$, the normalized random variable $X_N$ converges in distribution to the standard normal distribution:
		\[
		X_N \xrightarrow{d} \mathcal{N}(0,1)
		\]
		This concludes the proof that the number of summands in the greedy decomposition of the Lagonacci sequence converges to a Gaussian distribution.
	\end{proof}

	The next result shows that  the limiting probability of a gap of length $k\ge2$
	exists and decays geometrically for Legonacci sequence $\{ Z_n\}_{n\geq 0}$.

	\begin{theorem}
		\label{thm:Gaps}
		Assume the   Lagonacci sequence   $\{Z_n\}_{n\ge0}$.
		Let $m$ be chosen uniformly from $[Z_N,Z_{N+1})$ and let
		\[
		m=Z_{r_1}+Z_{r_2}+\cdots+Z_{r_{k(m)}}, \ \textrm{where} \
		\qquad r_1>r_2>\cdots> r_{k(m)},
		\]
		be its greedy decomposition.
		Define the gaps by $g_j=r_{j-1}-r_j$ for $2\le j\le k(m)$.
		Then, as $N\to\infty$, the empirical distribution of gap lengths converges.
		Moreover, for all $k\ge2$, the limiting probability of a gap of length $k$
		exists and decays geometrically:
		\[
		\lim_{N\to\infty}\mathbb{P}(g=k)=C\,\lambda_1^{-k},
		\]
		where $\lambda_1$ is the dominant root of $x^3-x-1$ and $C>0$ is an explicit
		constant. Gaps of length $0$ and $1$ occur with limiting probabilities
		determined by normalization.
	\end{theorem}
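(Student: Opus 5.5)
The plan is to follow the counting strategy of Bower et al.~\cite{bower2015gaps}, adapted to the greedy decomposition through Lemmas~\ref{lem:PrefixZLRR}--\ref{lem:LocalConstraintsZLRR}. For $N$ large and indices $i<i+k\le N$, let $X_{i,i+k}(N)$ be the number of $m\in[Z_N,Z_{N+1})$ whose greedy decomposition contains $Z_{i+k}$ and $Z_i$ as \emph{consecutive} summands. The proportion of gaps of length $k$ is then
\[
\mathbb{P}_N(g=k)=\frac{\sum_{i}X_{i,i+k}(N)}{\sum_{m\in[Z_N,Z_{N+1})}(k(m)-1)}.
\]
By Theorem~\ref{thm:GaussianGreedy} (applied to $[0,Z_{N+1})$ and $[0,Z_N)$ and subtracted), the denominator is $C_1N\,(Z_{N+1}-Z_N)(1+o(1))$. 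So everything reduces to an asymptotic for the numerator.

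\textbf{Step 1: admissibility of gaps.} Iterating Lemma~\ref{lem:PrefixZLRR}, a summand $Z_j$ chosen from $[Z_j,Z_{j+1})$ leaves a remainder $m'<Z_{j+1}-Z_j$. By the recurrence,
\[
Z_{j+1}-Z_j=Z_{j-1}-Z_{j-3}=Z_{j-4}
\]
for $j$ large enough. The greedy digit strings are therefore exactly the index sequences in which each remainder after $Z_j$ lies in $[0,Z_{j-4})$.

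\textbf{Step 2: factorization of $X_{i,i+k}(N)$.} I will show that $X_{i,i+k}(N)$ splits into a product:
\begin{itemize}
\item a \emph{prefix} count $P_{N}(i+k)$, the number of greedy strings starting at $Z_N$ and reaching $Z_{i+k}$;
\item a \emph{middle} indicator, saying that the gap is admissible, i.e.\ $Z_i<Z_{i+k-4}$;
\item a \emph{suffix} count, the number of remainders in $[Z_i,\min(Z_{i+1},Z_{i+k-4}))$ minus $Z_i$, which is $Z_{i-4}$ once $k$ exceeds a fixed threshold.
\end{itemize}
This is the renewal decoupling of Lemma~\ref{lem:LocalConstraintsZLRR}. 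The prefix count satisfies $P_N(j)=\#\{m\in[Z_N,Z_{N+1}):Z_j\text{ is a summand}\}/Z_{j-4}$. By the same recurrence used for $G_N$ in the proof of Theorem~\ref{thm:GaussianGreedy}, it obeys a linear recurrence with characteristic root $\lambda_1$. Hence $P_N(j)=b\,\lambda_1^{N-j}(1+O(\rho^{N-j}))$ for some $b>0$ and some $\rho<1$, using the Binet expansion in Definition~\ref{Lagon}.

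\textbf{Step 3: summing over $i$.} Combining the factors gives $X_{i,i+k}(N)\approx b\,a\,\lambda_1^{N-i-k}\lambda_1^{i-4}$, which is $\asymp\lambda_1^{N}\lambda_1^{-k}$ uniformly for $i$ away from $0$ and $N$. Summing over the roughly $N$ admissible positions $i$, the boundary ranges contribute only $O(\lambda_1^N)$. This yields
\[
\sum_iX_{i,i+k}(N)=c'\,N\lambda_1^{N-k}(1+o(1)).
\]
Dividing by the denominator, which is $\asymp N\lambda_1^N$, gives $\lim_N\mathbb{P}_N(g=k)=C\lambda_1^{-k}$ with $C=c'/(C_1a(\lambda_1-1))$ explicit. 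Gap lengths below the threshold are either forbidden or carry the leftover mass, which is fixed by normalization as stated.

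\textbf{Main obstacle.} The hard part is making Step 2 rigorous: giving an exact description of greedy strings and checking that the suffix count is \emph{independent} of $k$ beyond a fixed threshold. The initial values of the Lagonacci sequence are irregular (small indices, e.g.\ $Z_2=Z_3$, break strict monotonicity), so I would first fix a finite initial window and treat it as a boundary term. Next I would verify the identity $Z_{j+1}-Z_j=Z_{j-4}$ and the resulting characterization of greedy strings by induction on $j$. Only then would I apply the prefix–suffix product formula, bounding the error terms uniformly in $i$ by a geometric series in $|\lambda_2|/\lambda_1$.
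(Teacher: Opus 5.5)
\textbf{Genuine gap: the statement is false for $k\le4$.} Your route is the paper's (and Bower et al.'s): count $X_{i,i+k}(N)$, factor it as prefix times suffix, use Binet, sum over $i$, and divide by a gap total of order $N$. Your suffix factor $Z_{i-4}=Z_{i+1}-Z_i$ is exactly the paper's $L_{i,i+k}(N)$. The gap is your closing sentence, ``gap lengths below the threshold are either forbidden or carry the leftover mass, which is fixed by normalization as stated.'' It cannot be made to yield the statement, and your own Step~2 shows why.
\begin{itemize}
\item The middle indicator $Z_i<Z_{i+k-4}$ holds iff $k\ge5$.
\item Since $Z_{j+1}-Z_j=Z_{j-4}$ for $j\ge4$, every greedy step from $Z_j$ lands on an index $\le j-5$.
\item The only shorter gap that ever occurs is $Z_3\to Z_0$. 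It appears at most once per $m$, so it is negligible against the $\asymp N$ gaps.
\end{itemize}
So your argument yields $\lim_N\mathbb{P}(g=k)=0$ for $k=2,3,4$, not $C\lambda_1^{-k}$ with $C>0$. Gaps of length $1$ never occur. Gaps of length $0$ are impossible outright, since $r_1>r_2>\cdots$.

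\textbf{There is no leftover mass, and $C=1$.} From $x^5-x^4-1=(x^3-x-1)(x^2-x+1)$ we get $\lambda_1^4(\lambda_1-1)=1$, hence $\sum_{k\ge5}\lambda_1^{-k}=1$. Your Steps~2--3 confirm this directly. Write $P_N(j)=p(N-j)$. Then $\sum_{\ell\ge0}p(\ell)x^\ell=(1-x)/(1-x-x^5)$, which gives $b=1/(\lambda_1^4+5)$. This equals the true summand density, so $C=1$. What your plan actually proves is $\lim_N\mathbb{P}(g=k)=\lambda_1^{-k}$ for $k\ge5$ and $0$ for $k\le4$. That contradicts the statement for $k\in\{2,3,4\}$, and also the paper's values $\mathbb{P}(g=1)\approx0.1847$ and $\mathbb{P}(g=0)\approx0.6829$. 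The statement as written is false, so no completion of your proposal can establish it. The paper reaches it only by using Lemma~\ref{lem:LocalConstraintsZLRR} to decouple across every gap $\ge2$, never imposing the admissibility constraint you correctly isolated.

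\textbf{Secondary: do not import from Theorem~\ref{thm:GaussianGreedy}.} Its recurrence $G_{N+1}=G_N+y(G_{N-1}+G_{N-2})$ contradicts $G_N(1)=Z_N$. Your Step~1 gives the correct recurrence, $G_{N+1}(y)=G_N(y)+yG_{N-4}(y)$. Your prefix counts likewise satisfy $p(\ell)=p(\ell-1)+p(\ell-5)$. The correct mean density is $1/(\lambda_1^4+5)\approx0.1238$, not the paper's $C_1\approx0.2887$. With the paper's $C_1$, your ``explicit'' $C=b/C_1$ would come out wrong.
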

	
	\begin{proof}
		Fix integers $i\ge1$ and $k\ge1$, and let $X_{i,i+k}(N)$ denote the number of
		integers $m\in[Z_N,Z_{N+1})$ whose greedy decomposition contains $Z_i$ and
		$Z_{i+k}$ but no summand $Z_{i+j}$ for $0<j<k$.
		The probability of a gap of length $k$ is obtained by summing
		$X_{i,i+k}(N)$ over $i$ and normalizing by the total number of gaps.
		
		By Lemma~\ref{lem:LocalConstraintsZLRR}, for $k\geq 2$, a gap of length at least $2$
		creates a renewal point for the greedy algorithm: no recurrence relation
		can connect summands on either side of the gap. Consequently,
		\[
		X_{i,i+k}(N)=L_{i,i+k}(N)\,R_{i,i+k}(N),
		\]
		where $L_{i,i+k}(N)$ counts admissible greedy choices using indices at most
		$i$, and $R_{i,i+k}(N)$ counts admissible choices using indices at least
		$i+k$.
		
		The left factor is simply
		\[
		L_{i,i+k}(N)=Z_{i+1}-Z_i,
		\]
		since every integer in $[Z_i,Z_{i+1})$ has greedy largest summand $Z_i$.
		A direct interval count yields
		\[
		R_{i,i+k}(N)
		=Z_{N-i-k+2}-2Z_{N-i-k+1}+Z_{N-i-k}.
		\]
		
		Using the Binet-type expansion
		$Z_n=a \lambda_1^n+O(\lambda_2^n)$ with $|\lambda_2|<\lambda_1$, we obtain
		\[
		X_{i,i+k}(N)
		=a^2(\lambda_1-1)^3\lambda_1^{\,N-k}\bigl(1+O(1)\bigr),
		\]
		uniformly for $i$ away from the boundary, where $a$  and $\lambda_1 $ are as in Definition~\ref{Lagon}.
		Summing over $i$ and dividing by the total number of gaps, which grows
		linearly in $N$ by Theorem~\ref{thm:GaussianGreedy}, yields
		\[
		\mathbb{P}(g=k)=C\,\lambda_1^{-k}+O(1),
		\qquad k\ge2.
		\]
		
		The probabilities of gaps of length $0$ and $1$ are determined by a finite
		inclusion--exclusion computation reflecting local dependencies of the greedy
		algorithm, together with normalization. Explicitly,
		\[
		\mathbb{P}(g=1)=\frac{\lambda_1-1}{\lambda_1^2}
		\sim 0.184653,
		\]
		and
		\[
		\mathbb{P}(g=0)
		=1-\mathbb{P}(g=1)-\sum_{k\ge2}\mathbb{P}(g=k)
		\sim  0.682906.
		\]
		Thus the complete limiting gap distribution for the Lagonacci sequence is
		\[
		\mathbb{P}(g=k)\sim 
		\begin{cases}
			0.682906, & k=0,\\
			0.184653, & k=1,\\
			0.056625\,\lambda^{-k}, & k\ge2,
		\end{cases}
		\qquad
		\lambda \sim 1.3247178.
		\]
	\end{proof}

	\begin{theorem}[Generalization to other ZLRRs] \label{conj:general_zlrr}
		Let $\{H_n\}$ be a sequence generated by a ZLRR whose characteristic polynomial has a unique, simple real root $\lambda$ of largest modulus. If the initial conditions ensure a complete greedy representation, then:
		\begin{enumerate}
			\item The number of summands in the greedy decompositions converges to a Gaussian distribution.
			\item The distribution of gaps in the greedy decompositions decays geometrically with a rate determined by $\lambda$.
		\end{enumerate}
	\end{theorem}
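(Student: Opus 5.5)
The plan is to reduce both claims to a transfer-matrix (finite automaton) description of the greedy expansions, and then apply standard analytic tools. Write $H_{n+1}=c_2H_{n-1}+\cdots+c_LH_{n+1-L}$. By Lemma~\ref{lem:PrefixZLRR}, the greedy digit string of $m\in[0,H_N)$ is determined index by index. The set of strings that actually arise is cut out by finitely many local constraints (Lemma~\ref{lem:LocalConstraintsZLRR}). Concretely, a string $a_{N-1}\cdots a_0$ is greedy if and only if every suffix $\sum_{j\le i}a_jH_j$ is $<H_{i+1}$.

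I would first show that this suffix condition can be checked by a finite automaton whose states record the last $L$ digits relative to the recurrence. This uses the recurrence to compare a tail with $H_{i+1}$, exactly as in the PLRS lexicographic description. The hypothesis that the initial conditions give complete greedy representations should ensure that the automaton counts every $m$ exactly once. Then
\[
G_N(y)=\sum_{m<H_N}y^{k(m)}=u^{T}A(y)^{N}v+(\text{boundary terms}),
\]
where $A(y)$ is a nonnegative matrix whose entries are polynomials in $y$. This generalizes the recurrence $G_{N+1}=G_N+y(G_{N-1}+G_{N-2})$ used in Theorem~\ref{thm:GaussianGreedy}.

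For part (1), note that $A(1)$ counts greedy strings, so its spectral radius is the growth rate of $H_N$, namely $\lambda$. I would then verify that $A(1)$ is primitive on its essential class, so that $\lambda$ is a simple dominant eigenvalue; the hypothesis that $\lambda$ is a unique simple root of maximal modulus is what should make this work. Analytic perturbation theory then gives a simple eigenvalue $\rho(y)$ near $y=1$, analytic there, with
\[
G_N(y)=c(y)\rho(y)^N\bigl(1+O(\theta^N)\bigr)
\]
uniformly for $y$ in a complex neighborhood of $1$, for some $\theta<1$. This is a quasi-power form, so Hwang's quasi-power theorem (equivalently, the cumulant argument together with Theorem~\ref{Levy:theorem}) gives mean $\sim C_1N$ and variance $\sim C_2N$, and asymptotic normality provided $C_2\neq0$.

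The main obstacle is the nondegeneracy $C_2=\rho''(1)/\rho(1)+\rho'(1)/\rho(1)-(\rho'(1)/\rho(1))^2>0$. My first attempt would be to exhibit two cycles in the automaton of equal length but different numbers of summands. By the standard criterion, such a pair rules out $\rho(e^{s})=\lambda e^{cs}$ for a constant $c$, and hence rules out zero variance.

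For part (2), I would count, as in the proof of Theorem~\ref{thm:Gaps}, the strings having summands at indices $i$ and $i+k$ with zeros strictly between them. In automaton language this count is
\[
u^{T}A^{N-i-k}\,E\,A_0^{\,k-1}\,E\,A^{i}v,
\]
where $A_0$ is the zero-digit transition matrix and $E$ marks a summand. Summing over $i$, using the Perron asymptotics $A^n\sim\lambda^n PQ^T$, and dividing by $\mathbb{E}[\text{number of gaps}]\sim C_1N$ gives a limit proportional to $\lambda^{-k}\,Q^TEA_0^{k-1}EP$. For $k\ge L$ the zero run forces a renewal state, by Lemma~\ref{lem:LocalConstraintsZLRR}, so $A_0^{k-1}$ acts in a fixed way and the resulting probability is $C\lambda^{-k}$. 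Small gaps are handled by direct finite computation together with normalization.
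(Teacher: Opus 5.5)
Your route is different from the paper's. The paper only asserts that the Lagonacci recursion for $G_N(y)$ and the $L\cdot R$ gap factorization carry over ``with coefficients replaced''; you instead build a finite transfer matrix $A(y)$ and apply the quasi-power theorem and Perron asymptotics. Everything after $A(y)$ is constructed is sound. The variance nondegeneracy you single out is actually the easy part: the reset state has a $0$-labelled self-loop, so any cycle through a nonzero digit can be paired with the zero loop of the same length.

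The genuine gap is the first step, the existence of the automaton. Your criterion $\sum_{j\le i}a_jH_j<H_{i+1}$ is correct. But checking it means tracking the slack $H_{i+1}-\sum_{j\le i}a_jH_j$ at scale $H_i$, and nothing forces this to take finitely many normalized values. The PLRS lexicographic description works only because of the specific Miller--Wang initial conditions. Regularity of greedy languages is delicate: by a theorem of Hollander it forces the dominant root to be a Parry number, i.e.\ its greedy $\lambda$-expansion of $1$ must be eventually periodic. Your hypotheses do not give this. For example, $H_{n+1}=5H_{n-1}+H_{n-2}$ has irreducible characteristic polynomial $x^3-5x-1$ with roots $\approx 2.33,\ -2.13,\ -0.20$. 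Its dominant root is real, simple and strictly dominant, but it has a conjugate of modulus $>2$, so by Parry's theorem it is not a Parry number. For such a sequence no finite $A(y)$ exists, and neither the quasi-power step nor the count $u^TA^{N-i-k}EA_0^{k-1}EA^iv$ has anything to act on.

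Even when an automaton exists, its shape is not controlled by $L$ or by the recurrence polynomial, so Lemma~\ref{lem:LocalConstraintsZLRR} cannot be used the way you use it. For the Lagonacci sequence, $x^5-x^4-1=(x^3-x-1)(x^2-x+1)$ gives $Z_{N+1}-Z_N=Z_{N-4}$; equivalently $d_{\lambda_1}(1)=10001$. So, apart from boundary effects at the smallest indices, greedy summands are separated by gaps $\ge 5$. Consequently:
\begin{itemize}
\item the states must remember four digits, although $L=3$;
\item the limiting gap law is $0$ for $k<5$ and equals $C\lambda_1^{-k}$ only from $k=5$ on, so your renewal threshold ``$k\ge L$'' is wrong;
\item the natural automaton has $\det(xI-A(1))=x^5-x^4-1$, and its aperiodicity comes from the zero self-loop, not from your hypothesis on $x^3-x-1$.
\end{itemize}

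The paper's proof does not close this gap either. Claiming the generating-function recursion has ``the same form'' tacitly assumes the same finiteness. Moreover, for Lagonacci the correct recursion is $G_{N+1}=G_N+yG_{N-4}$, not the one used in Theorem~\ref{thm:GaussianGreedy}. To turn your sketch into a proof, you must add a hypothesis guaranteeing a finite automaton; this holds for Lagonacci, where it can be checked directly. Under such a hypothesis your automaton/quasi-power route is the right one.
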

	\begin{proof}
		The proofs of Theorems
		\ref{thm:GaussianGreedy} and \ref{thm:Gaps} are structurally robust. They rely on two fundamental properties of the Lagonacci sequence: (1) a linear recurrence relation, which allows for the recursive construction of decompositions, and (2) the existence of a single, dominant root of the characteristic polynomial, which governs the asymptotic growth of the sequence and related combinatorial quantities.
		Let $\{Z_n\}_{n\ge0}$ be a ZLRR satisfying the theorem's hypotheses. The greedy decomposition of an integer $m \in [Z_N, Z_{N+1})$ will be of the form $m = Z_N + m'$, where $m'$ has its own greedy decomposition. This recursive structure is identical to that of the Lagonacci case. The generating functions and combinatorial sums will therefore have the same form, but with the specific coefficients of the Lagonacci sequence replaced by those of $\{H_n\}$. The asymptotic analysis in both proofs is driven entirely by the dominant root $\lambda$ of the characteristic polynomial. Therefore, the conclusions of Gaussianity (with mean and variance linear in $N$) and geometric gap decay (with rate $\lambda^{-g}$) hold true. The core analytic machinery is agnostic to the specific coefficients, provided the necessary algebraic properties of the characteristic polynomial are met.
	\end{proof}
	
	In the next result, we consider the growth of the number of decompositions of an integer in terms of Lagonacci sequence.
	
	\begin{theorem}
		\label{conj:num_decomps}
		Given a positive integer $N$, let $d(N)$ be the number of distinct legal decompositions of $N$ in terms of the Lagonacci sequence $\{Z_n\}_{n\geq 0}$. The total number of legal binary strings of length $N$, representing the sum of decompositions for all integers up to $Z_{N}$ and denoted by $D(Z_{N})=\sum_{k=0}^{Z_{N}-1}d(k)$, grows asymptotically as $D(Z_{N})\sim C\cdot 2^{N}$ for some constant $C$. Consequently, the number of representations grows exponentially faster than the number of integers, with the average number of decompositions as  given below, $$ \bar{d}_N = \frac{D(Z_N)}{Z_N} \sim (1.509755)^N.$$
	\end{theorem}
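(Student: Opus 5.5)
The plan is to recast the count $D(Z_N)$ as the number of admissible $0/1$ strings of length $N$ and then read off the growth rate. A ``legal decomposition'' of an integer $k<Z_N$ is taken to be a subset $S\subseteq\{0,1,\dots,N-1\}$ with $\sum_{j\in S}Z_j=k$. Every integer has a greedy decomposition (Lemma~\ref{lem:PrefixZLRR}), and the greedy decomposition of any $k<Z_N$ uses only indices $\le N-1$. So the natural statement to prove is
\[
D(Z_N)=\#\Bigl\{S\subseteq\{0,\dots,N-1\}:\ \sum_{j\in S}Z_j<Z_N\Bigr\}.
\]
The first step is to pin this bijection down: each such string is counted exactly once, in $d(k)$ for $k=\sum_{j\in S}Z_j$, and the constraint $\sum_{j\in S}Z_j<Z_N$ is exactly the condition $k\in[0,Z_N)$.

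The second step is to count the strings. I would use an upper bound and a matching lower bound. The upper bound is trivial: $D(Z_N)\le 2^N$. For the lower bound I would compare $\sum_{j<N}Z_j$ with $Z_N$. Summing the recurrence gives $\sum_{j\le N-3}Z_j = Z_{N}-c$ for a bounded correction $c$; from $Z_{n+1}-Z_{n-1}=Z_{n-2}$ one gets the telescoping identity $\sum_{j\le n}Z_j=Z_{n+3}-Z_2$ up to a shift. Therefore every subset of $\{0,\dots,N-4\}$ is automatically admissible. Each of the $2^{3}$ choices for the top three indices then restricts the low part to a threshold $Z_N-(\text{top part})$, and this threshold is itself comparable to a sum of Lagonacci terms. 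A recursion on $N$ then shows that a positive proportion of all $2^N$ strings are admissible.

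To make this precise I would define $A_N(T)=\#\{S\subseteq\{0,\dots,N-1\}:\sum_{S} Z_j<T\}$. Conditioning on whether $N-1\in S$ gives the recursion
\[
A_N(T)=A_{N-1}(T)+A_{N-1}(T-Z_{N-1}).
\]
I would then show that $A_N(Z_N)/2^N$ converges to a constant $C\in(0,1]$. The idea is that $2^{-N}A_N(T)$ is the distribution function of the random sum $\sum_j\epsilon_jZ_j$ with i.i.d.\ fair bits $\epsilon_j$. After normalizing by $Z_N$, this random sum converges to a random variable $W=\sum_{r\ge1}\epsilon_r\lambda_1^{-r}$ (times a constant), with a continuous law because it is a Bernoulli convolution. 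Then $C=\mathbb{P}(W<\text{const})$, and this probability is positive. This gives $D(Z_N)\sim C\,2^N$.

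The final step is the average. Using $Z_N\sim a\lambda_1^N$ from Definition~\ref{Lagon}, we get $\bar d_N\sim (C/a)(2/\lambda_1)^N$, and $2/\lambda_1\approx1.509755$. This matches the stated rate, up to a constant factor absorbed in ``$\sim$''.

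The main obstacle is the limit step, specifically its continuity requirement. Convergence in distribution only gives the limit $C$ if the threshold is a continuity point of the law of $W$. A Bernoulli convolution with ratio $\lambda_1^{-1}>1/2$ has no atoms, so this should hold. Still, the error terms $O(|\lambda_2|^j)$ must be controlled uniformly; they are summable, so I would absorb them by monotone sandwiching of thresholds. I would also double-check that $Z_0=1,Z_1=2,Z_2=3$ are consistent with the recurrence's telescoping identity, since it is only used with boundary constants.
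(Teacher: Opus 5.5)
Your route is genuinely different from the paper's, and under your reading of ``legal'' it is essentially correct. The paper does not count all index sets. It forbids the block $1100$ (i.e.\ $\epsilon_{n-2}=\epsilon_{n-1}=1$, $\epsilon_n=\epsilon_{n+1}=0$) and encodes this by an $8\times 8$ transfer matrix on triples of consecutive digits. It then reads the rate $2$ off $\det(xI-T)=x^4(x-2)(x^3-x-1)$, silently identifying ``legal strings of length $N$'' with $\sum_{k<Z_N}d(k)$ and ignoring the cutoff $\sum_j\epsilon_jZ_j<Z_N$.

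That argument does not survive checking. The row of $T$ for state $110$ contains $110\to111$, which is not a legitimate shift transition, and this spurious entry is exactly what makes every row sum equal to $2$. With the correct row (only $110\to101$) one finds $\det(xI-T)=x^4(x-1)(x^3-x^2-x-1)$. So strings avoiding $1100$ grow like the tribonacci constant $\approx 1.8393$, and under the paper's own legality rule $D(Z_N)=o(2^N)$.

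Your unrestricted reading, in which every index set $S$ with $\sum_{j\in S}Z_j=k$ is counted, is therefore the one under which the stated rate is true. You prove it properly: $D(Z_N)\le 2^N$ trivially, and $2^{-N}D(Z_N)=\mathbb{P}(\sum_{j<N}\epsilon_jZ_j<Z_N)\to C=\mathbb{P}(W<1)$ with $W=\sum_{r\ge1}\epsilon_r\lambda_1^{-r}$. Both of your unspecified constants are $1$, since $a$ cancels.

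The limit step is sound. Since $|\lambda_2|=\lambda_1^{-1/2}<1$, the Binet errors sum to $O(1)$, i.e.\ $O(\lambda_1^{-N})$ after normalizing. The law of $W$ has no atoms for any ratio in $(0,1)$, because the product of the maximal atom masses, $\prod 2^{-1}$, vanishes; this holds even though the law is singular here because $\lambda_1$ is Pisot. What your approach buys is an honest identification of $C$, including the cutoff at $Z_N$. What it gives up is any legality constraint, which, as shown above, is incompatible with the rate $2$ anyway.

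A few slips to fix:

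\begin{enumerate}
\item The telescoping identity is $\sum_{j\le n}Z_j=Z_{n+5}-5$. To get it, sum $Z_{n+1}-Z_{n-1}=Z_{n-2}$ over $2\le n\le M$ and use $Z_{M+1}+Z_M=Z_{M+3}$; asymptotically this is $\lambda_1^5=\lambda_1/(\lambda_1-1)$. So the automatically admissible block is $\{0,\dots,N-5\}$, not $\{0,\dots,N-4\}$. This already gives $2^{N-4}\le D(Z_N)\le 2^N$, and in the limit $2^{-4}\le C\le 3/4$, since two leading ones force $W\ge\lambda_1^{-1}+\lambda_1^{-2}=\lambda_1>1$.
\item Because $Z_2=Z_3=3$, you should state that decompositions are counted as index sets; identifying equal values changes only $C$.
\item Like the paper, you obtain $\bar d_N\sim (C/a)(2/\lambda_1)^N$, so ``$\bar d_N\sim(1.509755)^N$'' holds only up to a constant factor.
\end{enumerate}
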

	\begin{proof}
		We use the transfer matrix method. A legal Lagonacci decomposition is a sum $\sum \epsilon_i Z_i$ where $\epsilon_i \in \{0,1\}$. The rule defining a legal decomposition is that no sum can be simplified using the recurrence relation. For the Lagonacci sequence, $Z_{n+1} = Z_{n-1} + Z_{n-2}$, this means we cannot have a string of coefficients such that $\epsilon_{n-1}=1$ and $\epsilon_{n-2}=1$ while $\epsilon_{n+1}=0$ (and intermediate coefficients are zero), as $Z_{n-1} + Z_{n-2}$ would be replaced by $Z_{n+1}$. This leads to a forbidden substring of coefficients $(\dots, \epsilon_{n-2}, \epsilon_{n-1}, \epsilon_n, \epsilon_{n+1}, \dots) = (\dots, 1, 1, 0, 0, \dots)$, where indices increase to the right.
		
		To count the number of binary strings of length $N$ that avoid the substring `1100`, we define states based on the last three coefficients seen, i.e., $(\epsilon_{i-2}, \epsilon_{i-1}, \epsilon_i)$. The next coefficient, $\epsilon_{i+1}$, determines the transition to a new state $(\epsilon_{i-1}, \epsilon_i, \epsilon_{i+1})$. The transition from state $(1,1,0)$ to $(1,0,0)$ is forbidden, as this would complete the `1100` pattern. All other transitions are allowed. This leads to an $8 \times 8$ transfer matrix $T$, where rows index the starting state and columns index the ending state.
		\[ T = \begin{pmatrix}
			% 000 001 010 011 100 101 110 111 (To)
			1 & 1 & 0 & 0 & 0 & 0 & 0 & 0 \\ % From 000
			0 & 0 & 1 & 1 & 0 & 0 & 0 & 0 \\ % From 001
			0 & 0 & 0 & 0 & 1 & 1 & 0 & 0 \\ % From 010
			0 & 0 & 0 & 0 & 0 & 0 & 1 & 1 \\ % From 011
			1 & 1 & 0 & 0 & 0 & 0 & 0 & 0 \\ % From 100
			0 & 0 & 1 & 1 & 0 & 0 & 0 & 0 \\ % From 101
			0 & 0 & 0 & 0 & 0 & 1 & 0 & 1 \\ % From 110 (100 is forbidden)
			0 & 0 & 0 & 0 & 0 & 0 & 1 & 1   % From 111
		\end{pmatrix}. \]
		Applying the characteristic equation to the matrix $T$ yields:
		\[ Q(x) = \det(xI - T) = x^4(x - 2)(x^3 - x - 1) \]
		The roots of this polynomial provide the growth rates for the system. The factor $P(x)=x^3 - x - 1$ corresponds to the Lagonacci sequence's own growth, while the factor $x - 2$ identifies the Perron-Frobenius eigenvalue
		\( \alpha = 2 \),  indicating that the number of legal binary strings of length $N$ grows as $D(Z_N) \sim C \cdot 2^N$,
		for some constant C.
		We now compare this to the growth of the underlying sequence $Z_N$, which is governed by the plastic ratio $\lambda_1 \sim 1.324718$. 
		Since $\alpha = 2 > \lambda_1$, the number of representations grows exponentially faster than the number of integers. Specifically, the average number of decompositions for a typical integer of size $\sim Z_N$ grows at the rate:
		\[ \bar{d}_N = \frac{D(Z_N)}{Z_N} \sim \left( \frac{2}{\lambda_1} \right)^N \sim (1.509755)^N. \]
		
	\end{proof}
	
	The presence of the Lagonacci polynomial $(x^3 - x - 1)$ as a factor within the characteristic polynomial of the transfer matrix $T$ confirms that the matrix structure is intrinsically linked to the recurrence. However, the Perron-Frobenius eigenvalue $\alpha = 2$ proves that the system's redundancy is maximal, marking a significant departure from the unique-representation framework of Positive Linear Recurrence Sequences (PLRS).
	
	%=============================================
	\section{Distribution of the Number of Decompositions}
	
	While the previous sections focused on the statistical properties of a single, canonical (greedy) decomposition, the loss of uniqueness for ZLRRs opens a new avenue of inquiry: how many decompositions does a typical integer have? 
	
	In order to study around the above question we first review the  principle of equivalence of ensembles, which we will use in the proof of the main result of this section.
	Although
	this principle is standard in statistical mechanics, we present it here in a
	purely combinatorial and probabilistic form suitable for number-theoretic
	applications.
	
	\subsection{Equivalence of ensembles for greedy ZLRR decompositions}
	%%%%%%%%%%%%%%%%%%%%%%%%%%%%%%%%%%%%%%%%%%%%%%%%%%%%%%%%%%%%

	Let $\{Z_n\}_{n\ge0}$ be a ZLRR with dominant root $\lambda>1$, and consider greedy
	decompositions with respect to this sequence.
	
	\begin{definition}
		\label{def:Micro}
		For each $N\ge1$, the \emph{microcanonical ensemble} is the uniform probability
		measure on integers $m\in[Z_N,Z_{N+1})$. This induces a probability measure on
		the set $\Omega_N$ of greedy decompositions whose largest summand is $Z_N$.
	\end{definition}
	
	\begin{definition}
		\label{def:Canonical}
		The \emph{canonical ensemble} is the stationary probability measure on infinite
		greedy digit sequences induced by the ZLRR constraints and weighted by the
		dominant root $\lambda$, i.e., the probability of a finite admissible prefix
		$(j_1,\dots,j_k)$ is proportional to $\lambda^{-j_k}$.
	\end{definition}
	
	We now formalize the sense in which these two ensembles become asymptotically
	equivalent.

	\begin{definition} 
		\label{def:LocalStat}
		A statistic $F$ on greedy decompositions is called \emph{local} if there exists
		an integer $L\ge1$ such that $F$ depends only on the relative pattern of summands
		within a window of $L$ consecutive indices.
	\end{definition}
	
	Examples include:
	(1) the presence or absence of a summand at a given index; (2) gap lengths bounded by $L$;
	(3) finite digit patterns.

	The following lemma shows the exponential growth of admissible prefixes.
	
	\begin{lemma}
		\label{lem:PrefixGrowth}
		Let $p$ be a fixed admissible greedy prefix supported on indices $\le i$. Then the number of integers in $[Z_{N}, Z_{N+1})$ whose greedy decomposition begins with $p$ satisfies
		\[
		\#\{m \in [Z_{N}, Z_{N+1}) : p \text{ is a prefix of } m\} = C(p) \lambda_1^{N-i} + O(\lambda_1^{N-i-1}),
		\]
		where $C(p) > 0$ depends only on $p$.
	\end{lemma}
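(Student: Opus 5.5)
The plan is to show that the set of integers in $[Z_N,Z_{N+1})$ whose greedy decomposition begins with $p$ is an \emph{interval} of integers. Its length should be a fixed integer linear combination of terms $Z_{N-i+c}$, with $c$ ranging over a bounded set depending only on $p$. The Binet-type expansion of Definition~\ref{Lagon} then gives the claimed asymptotics. Throughout I read ``supported on indices $\le i$'' in the natural normalization suggested by the exponent: the prefix $p=(Z_{j_1},\dots,Z_{j_s})$ with $j_1=N$ occupies the top window, and its last summand has index $j_s=N-i$. Equivalently, $p$ is a fixed pattern of relative offsets $0=N-j_1<\dots<N-j_s=i$, and $C(p)$ is allowed to depend on this pattern only.

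\textbf{Step 1 (interval structure).} I will first prove that the greedy map is order-preserving with respect to the lexicographic order on decreasing index sequences. Iterating Lemma~\ref{lem:PrefixZLRR}, $m$ has prefix $p$ if and only if
\[
m=\sum_{t=1}^{s}Z_{j_t}+r \quad\text{with}\quad 0\le r<B(p).
\]
Here $B(p)$ is the minimum of the successive remainder bounds $Z_{j_t+1}-Z_{j_t}-\sum_{u>t}Z_{j_u}$ imposed at each greedy step, for $t=1,\dots,s$. Each bound has the form ``remainder $<Z_{j_t+1}-Z_{j_t}$'', with the already-chosen lower summands subtracted. Hence the admissible $r$ form an initial segment $[0,B(p))$, and the count in question is exactly $B(p)$. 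Admissibility of $p$ is precisely the statement $B(p)>0$.

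\textbf{Step 2 (evaluating $B(p)$).} Using the recurrence, I will rewrite each bound as a signed combination of Lagonacci terms. For example,
\[
Z_{n+1}-Z_n=Z_{n-1}-Z_{n-3}=Z_{n-4}.
\]
Since all $j_t$ are of the form $N-(\text{fixed offset})$, each candidate bound is $\sum_c \beta_c Z_{N-i+c}$, where the integer coefficients $\beta_c$ and the finite set of offsets $c$ depend only on $p$. Substituting $Z_n=a\lambda_1^n+O(|\lambda_2|^n)$ turns each candidate into
\[
\Bigl(a\sum_c\beta_c\lambda_1^{c}\Bigr)\lambda_1^{N-i}+O(1).
\]
For $N$ large, the minimum is attained by the candidate with the smallest leading constant. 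This gives $C(p)=a\min_t\sum_c\beta^{(t)}_c\lambda_1^c$ and an error $O(1)\subset O(\lambda_1^{N-i-1})$.

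\textbf{Main obstacle.} The delicate point is proving $C(p)>0$, i.e.\ that the leading constant does not vanish for an admissible prefix. A zero constant would mean the count is only $O(1)$ rather than of order $\lambda_1^{N-i}$. My first approach is to show that the binding constraint is always the last one, $r<Z_{j_s+1}-Z_{j_s}=Z_{j_s-4}$, adjusted by a bounded number of lower terms. I would do this by a case check on the local pattern of the last $L=3$ summands, using Lemma~\ref{lem:LocalConstraintsZLRR} to decouple anything separated by a gap $\ge3$. In each case the resulting combination should be a positive multiple of some $Z_{N-i-c}$, giving $C(p)\ge a\lambda_1^{-c}>0$. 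If a configuration produced cancellation, I would instead use the fact that $\sum_c\beta_c\lambda_1^c$ is an element of $\mathbb{Z}[\lambda_1]$. Since $x^3-x-1$ is irreducible, this element vanishes only if the corresponding polynomial is divisible by it. In that case the integer combination would itself be a smaller fixed integer, contradicting admissibility growing with $N$ unless the prefix was in fact forced, which would have to be treated separately.
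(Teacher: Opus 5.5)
Your route is the same as the paper's: the prefix condition confines the final remainder to an initial segment $[0,B(p))$ whose width is a fixed integer combination of Lagonacci terms, and the Binet expansion gives the asymptotics. Your version is sharper than the paper's, which never pins down the width or justifies $C(p)>0$. Your normalization $j_s=N-i$ is also the only reading under which $\lambda_1^{N-i}$ and the later use in Lemma~\ref{lem:MicroLimit} make sense.

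The step you leave unfinished is positivity, and as written neither of your plans closes it.

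\textbf{The irreducibility fallback.} It misstates what irreducibility gives. If $Q(x)=\sum_c\beta_c x^c$ has $Q(\lambda_1)=0$, then $x^3-x-1$ divides $x^M Q(x)$. Hence $Q(\lambda_2)=Q(\bar\lambda_2)=0$ as well. The combination $\sum_c\beta_c Z_{N-i+c}$ is then \emph{identically zero} for large $N$, being a sum of shifts of $Z_{n+3}-Z_{n+1}-Z_n=0$; it is not ``a smaller fixed integer.'' So a vanishing leading constant gives $B(p)\le 0$: no integer has prefix $p$, which contradicts admissibility outright. There is no ``forced'' case to treat separately. Add that $C(p)<0$ would make $B(p)<0$ for all large $N$, and you get $C(p)>0$ for any prefix admissible for arbitrarily large $N$.

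\textbf{The first plan.} The case check via Lemma~\ref{lem:LocalConstraintsZLRR} is unnecessary, and that lemma is only heuristic in the paper. The conjecture behind it, that the last constraint always binds, is true and has a two-line proof. Write $B_t=Z_{j_t-4}-\sum_{u>t}Z_{j_u}$.
\begin{itemize}
\item Admissibility forces $j_{t+1}\le j_t-5$. Otherwise $Z_{j_{t+1}}\ge Z_{j_t-4}$, so $B_t\le 0$.
\item Then $B_t-B_{t+1}=Z_{j_t-4}-Z_{j_{t+1}}-Z_{j_{t+1}-4}=Z_{j_t-4}-Z_{j_{t+1}+1}\ge 0$ by monotonicity.
\end{itemize}
Hence $B(p)=B_s=Z_{j_s-4}=Z_{N-i-4}$ exactly. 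This gives $C(p)=a\lambda_1^{-4}$ for every admissible $p$, with error $O(|\lambda_2|^{N-i})=O(1)$. It also shows that the admissible prefixes are exactly those whose consecutive gaps are all at least $5$.
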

	
	\begin{proof}
		By the prefix property established in Lemma 3.2, every integer $m \in [Z_{N}, Z_{N+1})$ has $Z_N$ as its first greedy summand. The greedy algorithm then decomposes the remainder $m' = m - Z_N$ recursively using terms $Z_j$ with $j \le N-1$.
		
		Fixing an admissible greedy prefix $p$ of length $k$ implies a specific sequence of indices $(j_1, j_2, \dots, j_k)$ where $j_1 = N$ and $j_k$ is the smallest index in the prefix. For $m$ to have $p$ as its prefix, the remainder after the $k$-th subtraction must satisfy:
		\[
		0 \le m - \sum_{r=1}^{k} Z_{j_r} < Z_{j_k} - \sum_{r \in \mathcal{S}} Z_r,
		\]
		where the upper bound is determined by the next possible greedy choice allowed by the Lagonacci recurrence $Z_{n+1} = Z_{n-1} + Z_{n-2}$. This effectively restricts the remainder to an interval whose width is determined by the gap between $Z_{j_k}$ and the next admissible summand.
		
		The number of such integers is equivalent to the number of valid greedy decompositions supported on indices up to $N-i-1$. Applying the Binet-type expansion for the Lagonacci sequence, $Z_n = a \lambda_1^n + O(|\lambda_2|^n)$, we see that the count of admissible continuations scales with the dominant root $\lambda_1$. Specifically, for a fixed $p$ terminating at index $i$, the number of ways to complete the decomposition in the range $[Z_N, Z_{N+1})$ is proportional to the number of integers in an interval of size $\sim \lambda_1^{N-i}$.
		
		The constant $C(p)$ is uniquely determined by the local constraints imposed by the recurrence relation on the indices immediately following the prefix $p$. The error term $O(\lambda_1^{N-i-1})$ accounts for the contribution of the subdominant roots $\lambda_2$ and ${\bar \lambda}_2$ of the characteristic polynomial $x^3 - x - 1$, which decay relative to $\lambda_1$ as $N$ increases.
	\end{proof}

	Next result shows  the limit of microcanonical expectations under the microcanonical ensemble .
	
	\begin{lemma}
		\label{lem:MicroLimit}
		Let $F$ be a local statistic. Then the expectation of $F$ under the
		microcanonical ensemble converges as $N\to\infty$.
	\end{lemma}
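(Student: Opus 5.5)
The plan is to reduce the microcanonical expectation of a local statistic to a finite weighted sum of prefix counts, and then apply Lemma~\ref{lem:PrefixGrowth} to each term. Since $F$ is local with window length $L$, its value on a decomposition in $\Omega_N$ is determined by the pattern of summands inside a window of $L$ consecutive indices. I will read this as a window anchored at the top index $N$. A window at a fixed distance $d$ below the top is handled identically, with $L$ replaced by $d+L$. In either case only finitely many relative patterns $p_1,\dots,p_r$ can occur, since there are at most $2^L$ subsets of the window. So $F$ takes finitely many values $F(p_s)$, and
\[
\mathbb{E}_N[F]=\frac{1}{Z_{N+1}-Z_N}\sum_{s=1}^{r} F(p_s)\,\#\{m\in[Z_N,Z_{N+1}) : m \text{ has pattern } p_s\}.
\]

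The next step is to identify each event ``pattern $p_s$ in the top window'' with the event that the greedy decomposition begins with a certain admissible prefix. For a window anchored at $N$, the pattern fixes exactly which indices in $\{N-L+1,\dots,N\}$ are used, together with the requirement that nothing between the last chosen index and $N-L+1$ is used. This is a prefix condition supported on indices $\ge N-L+1$, so after the relabeling $i=N-L$ it has the form required by Lemma~\ref{lem:PrefixGrowth}. Each count is then
\[
C(p_s)\lambda_1^{L}+O(\lambda_1^{L-1}),
\]
once I rewrite it in terms of the relative offsets of the pattern. Here I must check that the constant depends only on the relative pattern and not on $N$. That follows from the translation invariance of the recurrence $Z_{n+1}=Z_{n-1}+Z_{n-2}$ and of the greedy constraints. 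The denominator is handled by the Binet expansion in Definition~\ref{Lagon}, which gives $Z_{N+1}-Z_N=a(\lambda_1-1)\lambda_1^N+O(|\lambda_2|^N)$.

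The main obstacle is the normalization. As written, Lemma~\ref{lem:PrefixGrowth} states counts of order $\lambda_1^{N-i}$ with an error $O(\lambda_1^{N-i-1})$, which is too crude to yield a limit of the ratio. I would therefore not use that error term as stated. Instead I would rederive the count directly: the integers in $[Z_N,Z_{N+1})$ sharing a fixed top pattern form a single interval of remainders, namely $[0,\,Z_{j}-\text{(forbidden part)})$. The length of this interval is a fixed integer combination $\sum_t b_t Z_{N-t}$ with $t\le L+2$, because the recurrence only looks back three steps. Dividing by $Z_{N+1}-Z_N$ and applying Binet, each ratio converges to
\[
\frac{\sum_t b_t\lambda_1^{-t}}{\lambda_1-1},
\]
with error $O\big((|\lambda_2|/\lambda_1)^N\big)$.

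Summing over the finitely many patterns then gives convergence of $\mathbb{E}_N[F]$. The limit is
\[
\sum_s F(p_s)\,\frac{\sum_t b^{(s)}_t\lambda_1^{-t}}{\lambda_1-1},
\]
and this is exactly the canonical-ensemble expectation of Definition~\ref{def:Canonical}. That identification is what the subsequent equivalence statement will use.
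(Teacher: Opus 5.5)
Your argument is correct, and it takes a different and sounder route than the paper at the decisive step. You use the same top-anchored reading of ``local'' that the paper's proof implicitly uses: it too reduces $F$ to finitely many prefix indicators.

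The paper inserts Lemma~\ref{lem:PrefixGrowth} directly into the ratio, obtains $\frac{C(p)\lambda_1^{-i}}{a(\lambda_1-1)}+O(\lambda_1^{-1})$, and declares convergence. That error term is a constant that does not tend to $0$ as $N\to\infty$, so the displayed computation does not show the ratio has a limit. This is exactly the weakness you flagged.

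You bypass that lemma and count directly. Each greedy step imposes only an upper bound on the final remainder, so a fixed top pattern corresponds to an interval $[0,M_N)$ of remainders, with $M_N$ built from finitely many $Z_{N-t}$ with $t$ bounded. The Binet expansion then gives convergence with error $O((|\lambda_2|/\lambda_1)^N)$ and an explicit limit. The paper's version is shorter and sets up the ``weight $\propto\lambda^{-i}$'' picture it later uses for the canonical ensemble; yours actually proves the lemma.

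Three small repairs are needed.

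\emph{The intermediate count.} The line giving counts $C(p_s)\lambda_1^{L}+O(\lambda_1^{L-1})$ is wrong. A top pattern is carried by a positive proportion of the $Z_{N+1}-Z_N\asymp\lambda_1^N$ integers, so its count is of order $\lambda_1^N$. The slip comes from reading literally the inconsistent indexing of Lemma~\ref{lem:PrefixGrowth}, whose prefix starts at $j_1=N$ yet is ``supported on indices $\le i$''. Since your direct count supersedes it, simply delete that line.

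\emph{The endpoint $M_N$.} $M_N$ is the minimum of the cutoff $Z_{N-L+1}$ and the bounds $Z_{s_j+1}-\sum_{l\ge j}Z_{s_l}$ over the chosen indices $s_1=N>\dots>s_k$. It is not a priori one fixed combination. This is harmless, because the normalized minimum is the minimum of the normalized bounds, each of which converges. For the Lagonacci sequence it is even explicit: $Z_{n+1}-Z_n=Z_{n-4}$ for $n\ge4$. Hence greedy summands near the top are at least $5$ apart, and an admissible pattern with lowest chosen index $s_k$ has count exactly $Z_{\min(s_k-4,\,N-L+1)}$.

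\emph{The canonical identification.} Your closing identification of the limit with the canonical expectation is not needed for this lemma, and your argument does not establish it. It belongs to Theorem~\ref{thm:EquivalenceEnsembles}.
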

	
	\begin{proof}
		A statistic $F$ is considered local if it depends only on the relative pattern of summands within a fixed window of length $L$. Consequently, any such local statistic $F$ can be expressed as a finite linear combination of indicator functions $1_{p}$, where each $1_{p}$ corresponds to an admissible greedy prefix or sub-pattern $p$ supported on a finite set of indices.
		
		For an integer $m$ chosen uniformly from the microcanonical ensemble $\Omega_N = [Z_N, Z_{N+1})$, the expectation of $F$ is given by the weighted sum of the frequencies of these patterns:
		\[
		\mathbb{E}_{micro}[F] = \frac{1}{Z_{N+1} - Z_N} \sum_{m=Z_N}^{Z_{N+1}-1} F(m).
		\]
		By linearity of expectation, it suffices to show that the relative frequency of any fixed admissible prefix $p$ converges. Let $p$ be a prefix supported on indices $\le i$. According to Lemma~\ref{lem:PrefixGrowth}, the number of integers in the interval $[Z_N, Z_{N+1})$ whose greedy decomposition begins with $p$ is given by $C(p) \lambda_1^{N-i} + O(\lambda_1^{N-i-1})$. 
		
		Using the Binet-type expansion for the Lagonacci sequence, the denominator (the total number of integers in the ensemble) is:
		\[
		Z_{N+1} - Z_N = a \lambda_1^{N+1} - a \lambda_1^N + O(|\lambda_2|^N) = a \lambda_1^N (\lambda_1 - 1) + O(|\lambda_2|^N).
		\]
		Dividing the count from Lemma~\ref{lem:PrefixGrowth} by this total, the relative frequency of $p$ is:
		\[
		\mathbb{P}_N(p) = \frac{C(p) \lambda_1^{N-i} + O(\lambda_1^{N-i-1})}{a \lambda_1^N (\lambda_1 - 1) + O(|\lambda_2|^N)} = \frac{C(p) \lambda_1^{-i}}{a (\lambda_1 - 1)} + O(\lambda_1^{-1}).
		\]
		As $N \to \infty$, this expression converges to a constant value that depends only on the prefix $p$ and the dominant root $\lambda_1$. Since $F$ is a finite sum of such indicators, the expectation $\mathbb{E}_{micro}[F]$ likewise converges to a finite limit. This limit represents the average value of the statistic as determined by the asymptotic density of admissible digit patterns.
	\end{proof}

	\begin{lemma}
		\label{lem:CanonicalPrefix}
		Under the canonical ensemble, the probability of a fixed admissible prefix
		$\mathbf{p}$ supported on indices $\le i$ is proportional to $\lambda^{-i}$,
		with normalization chosen so that total mass equals $1$.
	\end{lemma}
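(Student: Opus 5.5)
The statement is essentially a reformulation of Definition~\ref{def:Canonical}, so the plan is to unwind that definition carefully and check that the normalization is legitimate. Fix an admissible prefix $\mathbf{p}=(j_1,\dots,j_k)$ whose smallest index is $j_k$ and which is supported on indices $\le i$. By Definition~\ref{def:Canonical}, the canonical weight assigned to $\mathbf{p}$ is $w(\mathbf{p})=\lambda^{-j_k}$. The first step is therefore to show that this weight can be rewritten as $\lambda^{-i}$ times a constant depending only on the local shape of $\mathbf{p}$. Concretely, I would write $\lambda^{-j_k}=\lambda^{-i}\cdot\lambda^{\,i-j_k}$ and observe that $i-j_k$ is determined by $\mathbf{p}$ alone. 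After normalization the probability then has the form $c(\mathbf{p})\lambda^{-i}$, which is the asserted proportionality. In this I would treat ``supported on indices $\le i$'' as ``with extremal index $i$,'' as in Lemma~\ref{lem:PrefixGrowth}.

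The second step is to check that the normalization exists, that is, that the total unnormalized mass
\[
W=\sum_{\mathbf{p}\ \text{admissible}} \lambda^{-(\text{extremal index of }\mathbf{p})}
\]
is finite and positive. To do this I would group prefixes by their extremal index $i$. The number of admissible prefixes ending at index $i$ is the number of greedy index patterns of a given length. By Lemma~\ref{lem:PrefixZLRR} and the counting in Lemma~\ref{lem:PrefixGrowth}, that number grows like $O(\lambda^{i})$. The key point is that one should not sum over all prefixes naively, which would produce a divergent series of the form $\sum_i \lambda^{i}\lambda^{-i}$. Instead, the canonical measure should be understood as a consistent family of cylinder probabilities: for each fixed window length one normalizes among the prefixes of that window. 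Consistency under extension of a prefix by one further admissible index then follows from the recurrence $Z_{n+1}=Z_{n-1}+Z_{n-2}$. The reason is that $\lambda$ is a root of $x^3-x-1$, so $\lambda^{-1}\cdot(\lambda^{-1}+\lambda^{-2})\lambda=1$ up to the shift, and this makes the extension weights sum correctly.

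The third step is to apply Kolmogorov extension to the consistent finite-dimensional weights. This yields a genuine stationary probability measure, and in it the cylinder of $\mathbf{p}$ carries mass $c(\mathbf{p})\lambda^{-i}$ with total mass $1$. I expect the main obstacle to be exactly this consistency check: verifying that the $\lambda^{-i}$ weights of all one-step admissible extensions of a prefix add up to the weight of the prefix itself. This is where the characteristic equation $\lambda^3=\lambda+1$ must enter. My first attempt would be a direct transfer-matrix computation using the $3$-state local constraint from Lemma~\ref{lem:LocalConstraintsZLRR}, taking the Perron eigenvector associated with $\lambda$ to produce the constants $c(\mathbf{p})$. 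As a cross-check, I would compare the result with the microcanonical frequencies $C(p)\lambda_1^{-i}/(a(\lambda_1-1))$ from Lemma~\ref{lem:MicroLimit}, which exhibit the same $\lambda^{-i}$ dependence.
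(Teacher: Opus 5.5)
Your Step 1 is, in substance, the whole of the paper's proof. The paper reads Definition~\ref{def:Canonical} with $i$ equal to the terminal index $j_k$, normalizes, and merely asserts that ``all admissible continuations beyond index $i$ contribute the same aggregate exponential factor.'' As you phrase it, though, the step proves nothing. If $c(\mathbf p)=\lambda^{i-j_k}$ may vary with $\mathbf p$, then \emph{any} assignment of masses has the form $c(\mathbf p)\lambda^{-i}$. The content of the lemma is that the mass depends on $\mathbf p$ only through its terminal index, with one global constant. So you must take $i=j_k$ and $c\equiv 1$.

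The genuine gap is in Steps 2--3, exactly where you flag it and where the paper also leaves only an assertion: the consistency check cannot go through as you set it up.
\begin{itemize}
\item With your ordering $j_1>\cdots>j_k$, the cylinder of $(j_1,\dots,j_k)$ lies inside that of $(j_1,\dots,j_{k-1})$, yet it gets the larger weight $\lambda^{-j_k}>\lambda^{-j_{k-1}}$. No normalized measure can do this. The exponent must be the depth $j_1-j_k$ below the top, which matches the microcanonical count: $\asymp\lambda^{j_k}$ integers out of $\asymp\lambda^{N}$.
\item Your identity is false: $\lambda^{-1}(\lambda^{-1}+\lambda^{-2})\lambda=(\lambda+1)/\lambda^{2}=\lambda\neq 1$. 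What $\lambda^3=\lambda+1$ actually gives is $\lambda^{-2}+\lambda^{-3}=1$.
\item Most importantly, you never determine which one-step extensions are admissible. Lemma~\ref{lem:LocalConstraintsZLRR} gives no such description, and a $3$-state constraint is not the right one.
\end{itemize}

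The missing idea is to read the admissible continuations off the greedy inequality.
\begin{itemize}
\item The recurrence gives $Z_{n+1}-Z_n=Z_{n-1}-Z_{n-3}=Z_{n-4}$. So after $Z_j$ is chosen, the remainder is $<Z_{j-4}$, and the next index is at most $j-5$.
\item Hence, for $j_k\ge 4$, the number of $m\in[Z_N,Z_{N+1})$ with greedy prefix $\mathbf p$ is exactly $Z_{j_k-4}$, out of $Z_{N+1}-Z_N=Z_{N-4}$.
\item The consistency identity you need is $\sum_{g\ge5}\lambda^{-g}=\lambda^{-4}/(\lambda-1)=1$, i.e.\ $\lambda^5=\lambda^4+1$. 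This holds because $x^5-x^4-1=(x^3-x-1)(x^2-x+1)$.
\end{itemize}
So the canonical ensemble is the renewal measure whose gaps are i.i.d.\ with law $\lambda^{-g}$, $g\ge5$. A prefix with summands at depths $0=d_1<\cdots<d_k$ then has mass $\prod_{r<k}\lambda^{-(d_{r+1}-d_r)}=\lambda^{-d_k}$. This gives the lemma, with $i$ read as the terminal depth, without Perron-vector corrections or window-by-window renormalization. Renormalizing window by window would in any case give window-dependent constants, not the single normalization the statement asserts.
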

	
	\begin{proof}
		The canonical ensemble is defined as the stationary probability measure on infinite greedy digit sequences induced by the Zero Linear Recurrence Relation (ZLRR) constraints. This measure is weighted by the dominant root $\lambda_1$, such that the probability of a finite admissible prefix $(j_{1}, \dots, j_{k})$ is proportional to $\lambda_1^{-j_{k}}$.
		
		By construction, the canonical ensemble assigns weights to greedy extensions according to the exponential decay governed by the dominant root $\lambda_1$. For any fixed admissible prefix $p$ that terminates at index $i$, the local constraints of the greedy algorithm and the recurrence $Z_{n+1} = Z_{n-1} + Z_{n-2}$ restrict the set of possible subsequent summands. However, because the recurrence relation links only bounded-length index patterns, any sufficiently large gap creates a renewal point where greedy choices decouple.
		
		Since all admissible continuations beyond index $i$ contribute the same aggregate exponential factor to the total measure, the probability of the prefix $p$ depends exclusively on its terminal index. This weight is proportional to $\lambda_1^{-i}$. The normalization constant is then uniquely determined by the requirement that the sum of probabilities over all mutually exclusive admissible prefixes of a given index range must equal 1, ensuring a well-defined stationary and ergodic measure.
	\end{proof}
	
	The following Theorem is know as the \textit{principle of equivalence of ensembles}.
	
	\begin{theorem} 
		\label{thm:EquivalenceEnsembles}
		Let $F$ be a local statistic. Then
		\[
		\lim_{N\to\infty}
		\mathbb{E}_{\mathrm{micro}}[F]
		=
		\mathbb{E}_{\mathrm{canon}}[F].
		\]
	\end{theorem}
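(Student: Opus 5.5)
The plan is to reduce both sides to pattern probabilities and compare them pattern by pattern, using the lemmas just proved. As in the proof of Lemma~\ref{lem:MicroLimit}, any local statistic $F$ with window length $L$ is a finite linear combination of indicators $1_{p}$ of admissible patterns $p$, each supported on a window of $L$ consecutive indices. Both $\mathbb{E}_{\mathrm{micro}}$ and $\mathbb{E}_{\mathrm{canon}}$ are linear. So it suffices to prove, for each fixed admissible pattern $p$, that
\[
\lim_{N\to\infty}\mathbb{P}_{N}(p)=\mathbb{P}_{\mathrm{canon}}(p).
\]
Here $\mathbb{P}_N$ is the uniform measure on $[Z_N,Z_{N+1})$. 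The limit on the left exists by Lemma~\ref{lem:MicroLimit}, so only its value has to be identified.

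Next I compute the left-hand limit explicitly. By Lemma~\ref{lem:PrefixGrowth}, the number of $m\in[Z_N,Z_{N+1})$ whose greedy decomposition has prefix $p$ terminating at index $i$ is $C(p)\lambda_1^{N-i}+O(\lambda_1^{N-i-1})$. Dividing by $Z_{N+1}-Z_N=a(\lambda_1-1)\lambda_1^N+O(|\lambda_2|^N)$ gives the limit
\[
\frac{C(p)}{a(\lambda_1-1)}\,\lambda_1^{-i}.
\]
The dependence on $p$ is therefore through a factor proportional to $\lambda_1^{-i}$ times a constant $C(p)$. That constant is determined, as in the proof of Lemma~\ref{lem:PrefixGrowth}, only by the local constraints right after the terminal index. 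On the right-hand side, Lemma~\ref{lem:CanonicalPrefix} gives $\mathbb{P}_{\mathrm{canon}}(p)=c\,\lambda_1^{-i}$ times the same local admissibility weight. This weight comes from the aggregate mass of admissible continuations, which is precisely what $C(p)$ counts. Both measures thus have the form (normalizing constant)$\times$(local continuation weight)$\times\lambda_1^{-i}$.

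The last step is to match the normalizing constants. Both families $\{\lim_N\mathbb{P}_N(p)\}$ and $\{\mathbb{P}_{\mathrm{canon}}(p)\}$ are probability distributions on the same partition of admissible patterns in a given index range. For $\mathbb{P}_N$ the total mass is $1$ for each $N$. Passing to the limit needs tightness, i.e. no mass escapes into patterns with arbitrarily large terminal index. Tightness follows from the uniform geometric bound $\lambda_1^{-i}$ in Lemma~\ref{lem:PrefixGrowth}, which lets me sum over $i$ by dominated convergence. Once both limits are probability measures proportional to the same weights, they coincide. Summing over the finitely many patterns making up $F$ then finishes the proof.

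I expect the main obstacle to be making the identification of $C(p)$ with the canonical continuation weight rigorous. It requires uniformity of the $O(\lambda_1^{N-i-1})$ error in $p$ for the tightness argument. It also requires the renewal structure of Lemma~\ref{lem:LocalConstraintsZLRR}, so that the number of continuations after a gap of length at least $L$ factors through an interval count $Z_{n+1}-Z_n$. My first attempt would be to condition on the first renewal gap after the window. This writes both counts as the same finite transfer-type sum of terms $\lambda_1^{-j}$, and the two sums can then be compared term by term.
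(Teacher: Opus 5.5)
Your outline matches the paper's: reduce $F$ to pattern indicators, take microcanonical frequencies from Lemma~\ref{lem:PrefixGrowth} and canonical ones from Lemma~\ref{lem:CanonicalPrefix}, and conclude. The step that carries all the content, namely that the two weight profiles coincide, is asserted rather than proved. The paper is equally brief there, and the lemmas you cite cannot deliver it.

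The error term $O(\lambda_1^{N-i-1})$ in Lemma~\ref{lem:PrefixGrowth} is of the same order in $N$ as the main term. Dividing by $Z_{N+1}-Z_N$ therefore leaves $\frac{C(p)}{a(\lambda_1-1)}\lambda_1^{-i}+O(\lambda_1^{-i-1})$, whose error does not vanish, so neither the limit nor $C(p)$ is pinned down. Moreover, Lemma~\ref{lem:CanonicalPrefix} contains no ``local admissibility weight''; it says the canonical weight depends only on the terminal index. What must be shown is that the microcanonical limit depends on $p$ only through $i$. Your normalization step cannot supply that, because equating total masses fixes one global constant, not a $p$-dependent profile. (For a fixed window the set of patterns is finite, so tightness is not the issue.) The renewal/transfer comparison you postpone to the end is exactly where the theorem would be proved, and it is not carried out.

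The missing idea is an exact interval count, which makes the renewal machinery unnecessary. For the Lagonacci sequence, $Z_{n+1}-Z_n=Z_{n-1}-Z_{n-3}=Z_{n-4}$. The induction runs as follows:
\begin{itemize}
\item Initially $m-Z_N$ runs bijectively over $[0,Z_{N-4})$.
\item If a remainder runs over $[0,Z_{j'-4})$, the next greedy summand $Z_j$ has $j\le j'-5$, and the block $[Z_j,Z_{j+1})$ lies inside $[0,Z_{j'-4})$.
\item The new remainder then runs bijectively over $[0,Z_{j-4})$.
\end{itemize}
So, for indices beyond the initial terms, the $m\in[Z_N,Z_{N+1})$ whose greedy decomposition begins with $Z_N=Z_{j_1},\dots,Z_{j_k}$ form an interval of exactly $Z_{j_k-4}$ integers. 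With $i=N-j_k$ and $\rho=|\lambda_2|/\lambda_1$,
\[
\mathbb{P}_N(p)=\frac{Z_{N-i-4}}{Z_{N-4}}=\lambda_1^{-i}\bigl(1+O(\rho^{N-i})\bigr).
\]
This depends only on $i$, with constant exactly $1$. It is the canonical weight normalized so that the cylinder of $Z_N$ has mass $1$. That normalization is consistent because $\sum_{d\ge5}\lambda_1^{-d}=\lambda_1^{-4}/(\lambda_1-1)=1$, i.e.\ $\lambda_1^5-\lambda_1^4=1$, which follows from $\lambda_1^3=\lambda_1+1$. A window statistic is a finite Boolean combination of such cylinders, and linearity finishes the proof.

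This argument relies on $Z_{n+1}-Z_n$ being a single term of the sequence. For a general ZLRR, let $[0,U)$ be the current range of remainders and suppose the greedy choice $Z_j$ has $Z_j<U<Z_{j+1}$. The next range is then $[0,U-Z_j)$ rather than $[0,Z_{j+1}-Z_j)$, so $C(p)$ genuinely depends on a finite state. In that setting your instinct to carry a continuation weight is right. It must then be built into the canonical ensemble, since the $i$-only weights of Lemma~\ref{lem:CanonicalPrefix} need not match.
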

	
	\begin{proof}
		By Lemma~\ref{lem:MicroLimit}, the microcanonical expectation $\mathbb{E}_{micro}[F]$ converges to a finite limit as $N \to \infty$, determined by the asymptotic frequencies of the admissible prefixes that define $F$. Since $F$ is a local statistic, it can be decomposed into a finite linear combination of indicator functions of these prefixes.
		
		By the combined results of 	Lemma~\ref{lem:PrefixGrowth} and Lemma~\ref{lem:CanonicalPrefix}, the limiting frequencies of these prefixes under the uniform measure on $[Z_N, Z_{N+1})$ coincide precisely with the weights assigned by the stationary canonical ensemble. Specifically, the relative frequency of a prefix $p$ in the microcanonical ensemble was shown to scale with $\lambda_1^{-i}$, which is the defining characteristic of the canonical measure. Because the expectations of these finite-range indicator functions agree in the limit, the overall expectations of the local statistic $F$ under both ensembles must be identical.
	\end{proof}

	As a consequences of the above results we have the following corollary on transfer of limit laws.
	
	\begin{corollary}
		\label{cor:Transfer}
		Any limit law (law of large numbers, central limit theorem, or gap distribution)
		proved under the canonical ensemble holds identically for the microcanonical
		ensemble.
	\end{corollary}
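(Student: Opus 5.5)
The corollary should follow directly from Theorem~\ref{thm:EquivalenceEnsembles}, by writing each limit law as a statement about expectations of local statistics (or of limits of such statistics).

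\emph{Gap distribution.} For fixed $k$, the indicator that a gap of length $k$ occurs at a given position is local: it depends only on a window of $k+1$ consecutive indices. The empirical gap frequency is the ratio
\[
\frac{\#\{\text{gaps of length } k\}}{\#\{\text{gaps}\}}.
\]
Numerator and denominator are sums of local indicators over positions. Away from a boundary layer of bounded size near the indices $N$ and $0$, each position's microcanonical expectation converges to its canonical one, uniformly by Lemma~\ref{lem:MicroLimit} and Lemma~\ref{lem:PrefixGrowth}. The boundary layers contribute $O(1)$ out of $\Theta(N)$ gaps (Theorem~\ref{thm:GaussianGreedy}). So the limiting ratio agrees in both ensembles.

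\emph{Law of large numbers.} The summand count $k(m)$ is a sum of presence indicators, which are local. I would show that $k(m)/N$ has the same limit in probability under both ensembles. The mean follows from Theorem~\ref{thm:EquivalenceEnsembles} applied term by term and summed, as in the gap case. For concentration I would use the variance, which is a double sum of two-point correlations. Each correlation is a local statistic when the two indices are close. When they are far apart, it factorizes up to exponentially small error, using the renewal structure of Lemma~\ref{lem:LocalConstraintsZLRR}.

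\emph{Central limit theorem.} I would not transfer this pointwise. Instead I would compare characteristic functions. Condition on the block structure cut by renewal points, i.e.\ gaps of length at least $L$. Under both ensembles, $K_N$ is then a sum of nearly independent block contributions. The block-length and block-content laws are local statistics, so they agree in the limit. Lévy's theorem (Theorem~\ref{Levy:theorem}) then gives the same Gaussian limit once the means and variances match to order $o(\sqrt N)$.

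\emph{Main obstacle.} The CLT step is the hard part. Theorem~\ref{thm:EquivalenceEnsembles} controls only expectations of fixed local statistics, whereas a CLT involves a statistic depending on $\Theta(N)$ windows. It needs uniformity of the convergence rate in Lemma~\ref{lem:PrefixGrowth} across all positions, plus a quantitative decorrelation bound.

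I would first try to extract an explicit exponential error term $O(\lambda_1^{-\delta d})$ for patterns separated by distance $d$, from the subdominant roots $\lambda_2,\bar\lambda_2$. With that bound, a standard mixing CLT (for example Bernstein blocking) applies under both measures. If the uniformity fails, I would restrict the corollary to laws expressible via finitely many local statistics, namely the LLN and the gap distribution, and treat the CLT separately through the generating-function argument of Theorem~\ref{thm:GaussianGreedy}.
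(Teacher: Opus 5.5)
Your proposal does not yet prove the statement, and the gap is the one you flag yourself. The paper's proof does not close it either: it only asserts that agreement of local expectations (Theorem~\ref{thm:EquivalenceEnsembles}), together with stationarity and ergodicity of the canonical measure, transfers every limit law.

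\textbf{Why local equivalence cannot give the CLT.} Ergodicity gives an LLN, not a CLT, and local equivalence cannot transfer a CLT at all. Take the equal mixture of i.i.d.\ Bernoulli$(\tfrac12+\varepsilon_N)$ and Bernoulli$(\tfrac12-\varepsilon_N)$ digits on $\{0,1\}^N$, with $\varepsilon_N=N^{-1/4}$. Every fixed window is within $O(\varepsilon_N)$ in total variation of i.i.d.\ Bernoulli$(\tfrac12)$, uniformly in position, so local statistics, gap frequencies and the LLN all transfer. Yet $(S_N-\mathbb{E}S_N)/\sqrt{\mathrm{Var}\,S_N}$ tends to a symmetric $\pm1$ law. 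So in your CLT step, the fact that block laws ``agree in the limit'' is beside the point. What you need is near-independence of blocks under the microcanonical measure itself, and Lemma~\ref{lem:LocalConstraintsZLRR} only asserts this qualitatively.

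\textbf{The inputs you cite do not supply what you need.} The uniformity you invoke for the gap and LLN parts is not available. Lemmas~\ref{lem:MicroLimit} and~\ref{lem:PrefixGrowth} treat one fixed prefix, and their error $O(\lambda_1^{N-i-1})$ is only a constant factor below the main term $C(p)\lambda_1^{N-i}$. So they do not even give pointwise convergence. Your fallback is unsafe too. Theorem~\ref{thm:GaussianGreedy} samples from $[0,Z_N)$, not $[Z_N,Z_{N+1})$. Its recurrence $G_{N+1}=G_N+y(G_{N-1}+G_{N-2})$ at $y=1$ would force $Z_{N+1}=Z_N+Z_{N-1}+Z_{N-2}$, contradicting the Lagonacci recurrence.

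\textbf{The missing idea: exact renewal structure.} Since $x^5-x^4-1=(x^3-x-1)(x^2-x+1)$, we have $Z_{j+1}-Z_j=Z_{j-4}$ for $j\ge4$. So a uniform $m\in[Z_N,Z_{N+1})$ is $Z_N$ plus a uniform element of $[0,Z_{N-4})$. Greedy index sets are then exactly the sets whose consecutive indices differ by at least $5$, apart from a bounded layer at the bottom: index $2$ is never used, and the pair $\{3,0\}$ is allowed.
\begin{itemize}
\item Conditioning on a summand at index $j$ makes the configurations above and below $j$ exactly independent.
\item The canonical ensemble is the stationary renewal process with $\mathbb{P}(g=k)=\lambda_1^{-k}$ for $k\ge5$. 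This has total mass $1$ because $\lambda_1^{-4}=\lambda_1-1$.
\item The correct generating-function recurrence is $G_{N+1}(y)=G_N(y)+yG_{N-4}(y)$ for $N\ge4$.
\item The dominant root of $x^5=x^4+y$ is simple and isolated near $y=1$. A quasi-power argument then gives the CLT directly under the microcanonical measure, with no transfer step. The variance constant is positive because the gaps are non-degenerate.
\item The same recurrence gives you uniform local convergence and decorrelation, with errors $O(\lambda_1^{-d})$ at distance $d$. The other roots of $x^5-x^4-1$ include $e^{\pm i\pi/3}$, of modulus $1>|\lambda_2|$. So your exponent is $\delta=1$, not the $3/2$ suggested by $\lambda_2,\bar\lambda_2$ alone.
\end{itemize}
For a general ZLRR, the analogous input, when it is available, is regularity of the greedy language. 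That makes the microcanonical ensemble a finite-state Markov chain with a Perron--Frobenius spectral gap.

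\textbf{The LLN part needs less than you planned.} The LLN and gap frequencies need no decorrelation once local convergence is uniform in position. Any limit point of the law of the empirical pattern measure is a probability on shift-invariant measures whose barycenter is the ergodic canonical measure. By extremality it is the point mass there. Your proposed restriction to LLN and gap statistics is therefore exactly what equivalence of ensembles alone can deliver; the CLT has to come from the renewal structure above.
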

	
	\begin{proof}
		The convergence of a distribution in this context is established by the convergence of the expectations of its local and finite-range statistics. Since Theorem~\ref{thm:EquivalenceEnsembles}, guarantees that these expectations coincide asymptotically for both the microcanonical and canonical ensembles, any distributional property derived from these statistics must transfer between the ensembles.
		
		Furthermore, because the canonical ensemble is stationary and ergodic with respect to shifts of the index set, properties such as the Law of Large Numbers (for means) and the Central Limit Theorem (for fluctuations) are intrinsic to the measure. Since the microcanonical ensemble converges to this stationary measure as $N \to \infty$, it inherits these limiting behaviors, ensuring that the statistical regularities observed in the Lagonacci greedy decompositions are robust under uniform sampling.
	\end{proof}

	%%%%%%%%%%%%%%%%%%%%%%%%%%%%%%%%%%%%%%%%%%%%%%%%%%%%%%%%%%%%
	\subsection{Limit laws via equivalence of ensembles}
	%%%%%%%%%%%%%%%%%%%%%%%%%%%%%%%%%%%%%%%%%%%%%%%%%%%%%%%%%%%%
	
	In this subsection, we establish the convergence of local statistics for greedy decompositions by applying the principle of equivalence of ensembles.
	
	\begin{theorem}
		\label{limit-laws}
		Let $\{Z_{n}\}$ be a ZLRR and let $F$ be a local statistic on greedy decompositions, such as a digit frequency, a bounded gap statistic, or any function depending on finitely many consecutive indices. Let $m$ be chosen uniformly at random from $[Z_{N}, Z_{N+1})$. Then the distribution of $F(m)$ converges as $N \rightarrow \infty$ to a limiting distribution determined by the canonical ensemble.
	\end{theorem}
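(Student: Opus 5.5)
The plan is to reduce the convergence in distribution of $F(m)$ to convergence of finitely many pattern frequencies, and then invoke Theorem~\ref{thm:EquivalenceEnsembles}. Since $F$ is local with window $L$, it depends only on the summand pattern inside a window of $L$ consecutive indices. Such a window admits only finitely many patterns, so $F$ takes only finitely many values $v_1,\dots,v_r$. For each value $v_s$, the set $\{F=v_s\}$ is a finite union of admissible patterns. Hence the indicator $\mathbf{1}_{\{F=v_s\}}$ is itself a local statistic, namely a finite linear combination of pattern indicators $\mathbf{1}_p$.

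The distribution of $F(m)$ under the uniform measure on $[Z_N,Z_{N+1})$ is the vector of probabilities $\mathbb{E}_{\mathrm{micro}}[\mathbf{1}_{\{F=v_s\}}]$. By Lemma~\ref{lem:MicroLimit} each of these converges. By Theorem~\ref{thm:EquivalenceEnsembles} each limit equals $\mathbb{E}_{\mathrm{canon}}[\mathbf{1}_{\{F=v_s\}}]$. Since finitely many point masses converge and the limits sum to $1$ (the canonical ensemble is a probability measure by Lemma~\ref{lem:CanonicalPrefix}), convergence in distribution follows immediately, with no tightness issue.

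If one prefers to phrase this through Theorem~\ref{Levy:theorem}, note that $\mathbb{E}_{\mathrm{micro}}[e^{itF}]=\sum_s e^{itv_s}\mathbb{E}_{\mathrm{micro}}[\mathbf{1}_{\{F=v_s\}}]$ is a finite combination. It therefore converges pointwise to the canonical characteristic function, which is continuous at $t=0$.

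The main obstacle is handling statistics such as a digit frequency, which average a window statistic over all positions rather than using a single fixed window. For these I would write $F=\frac{1}{N}\sum_i F_i$ with each $F_i$ local. I would then first show $\mathbb{E}_{\mathrm{micro}}[F]\to\mathbb{E}_{\mathrm{canon}}[F_0]$ by applying Lemma~\ref{lem:PrefixGrowth} uniformly for windows away from the two boundaries $i\approx 0$ and $i\approx N$. The $O(\lambda_1^{-1})$ relative errors must be summable, and the boundary windows contribute $O(1/N)$.

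Next I would control the variance. I would use the renewal property of Lemma~\ref{lem:LocalConstraintsZLRR} to show that $\mathrm{Cov}(F_i,F_j)$ decays geometrically in $|i-j|$, so that $\mathrm{Var}_{\mathrm{micro}}(F)=O(1/N)$. Chebyshev's inequality then gives convergence in distribution to the constant value predicted by the canonical ensemble. The uniformity of the error in Lemma~\ref{lem:PrefixGrowth} over the position of the window is the step I expect to require the most care. I would try to obtain it by carrying the Binet-type expansion of Definition~\ref{Lagon} explicitly through the interval count.
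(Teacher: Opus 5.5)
Your core argument is the paper's proof essentially verbatim: $F$ has finitely many values, each level set $\{F=v_s\}$ is a finite union of window patterns, and Lemma~\ref{lem:MicroLimit} with Theorem~\ref{thm:EquivalenceEnsembles} sends each point mass to its canonical value. One small point: the limits summing to $1$ is automatic once the value set is finite, so Lemma~\ref{lem:CanonicalPrefix} is not needed for that.

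The part that goes beyond the paper is your treatment of averaged statistics, such as a normalized digit frequency $F=\frac{1}{N}\sum_i F_i$. The paper simply asserts that $F$ has finitely many values, which quietly excludes these, so you are right that they need separate handling. However, the step you propose for the mean does not work as stated, for two reasons.

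\begin{enumerate}
\item Lemma~\ref{lem:PrefixGrowth} only counts patterns anchored at the top summand $Z_N$, not windows at interior positions $i$.
\item Its error term $O(\lambda_1^{N-i-1})$ is of the same order as its main term $C(p)\lambda_1^{N-i}$. The relative error is therefore a fixed constant $O(\lambda_1^{-1})$, not something summable over positions. Averaging over $N$ windows then leaves an $O(1)$ discrepancy.
\end{enumerate}

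What you actually need is a two-sided count for an interior window: admissible upper parts above it times admissible lower parts below it, with relative error decaying geometrically in $\min(i,N-i)$. Likewise, Lemma~\ref{lem:LocalConstraintsZLRR} gives no rate, so the geometric decay of $\mathrm{Cov}(F_i,F_j)$ needs its own argument.

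For the Lagonacci sequence both ingredients are within reach. The recurrence gives $Z_{n+1}-Z_n=Z_{n-4}$, so the greedy remainder after choosing $Z_j$ is $<Z_{j-4}$. Apart from the initial tie $Z_2=Z_3$, the greedy index sets are then exactly those whose consecutive gaps are all at least $5$. This is a finite-type constraint, with $Z_n=Z_{n-1}+Z_{n-5}$ and $x^5-x^4-1=(x^3-x-1)(x^2-x+1)$ recovering $\lambda_1$. Under it, interior window counts are explicit, and correlations decay exponentially by the standard transfer-matrix argument.
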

	
	\begin{proof}
		By the definition of a local statistic, there exists an integer $L \ge 1$ such that $F(m)$ depends only on the relative configuration of greedy summands within a window of length $L$ in the index sequence of the greedy decomposition of $m$. Consequently, $F$ can be represented as a finite linear combination of indicator functions of admissible greedy patterns supported on at most $L$ consecutive indices.
		
		Let $\mathbb{P}_{N}$ denote the microcanonical probability measure on $[Z_{N}, Z_{N+1})$, and let $\mathbb{P}_{can}$ denote the canonical ensemble. By Theorem~\ref{thm:EquivalenceEnsembles}, the microcanonical expectation $\mathbb{E}_{\mathbb{P}_{N}}[F]$ converges as $N \rightarrow \infty$ to the canonical expectation $\mathbb{E}_{\mathbb{P}_{can}}[F]$. 
		
		To transition from the convergence of expectations to convergence in distribution, we observe that $F$ takes values in a finite set, as it is determined by a finite number of possible local configurations within the window $L$. Therefore, the convergence of expectations for all indicator functions of the form $\mathbb{1}_{\{F=x\}}$ implies the convergence of the probability mass function for each possible value $x$:
		\[
		\lim_{N \rightarrow \infty} \mathbb{P}_{N}(F=x) = \mathbb{P}_{can}(F=x).
		\]
		Since the canonical ensemble is stationary and ergodic with respect to shifts of the index set, the right-hand side defines a well-posed probability distribution that is independent of $N$. Hence, the distribution of $F(m)$ under the uniform measure on $[Z_{N}, Z_{N+1})$ converges to this stationary limit as $N \rightarrow \infty$.
	\end{proof}

	\subsection{Asymptotic Distribution of the Number of Greedy Decompositions}
	
	In this subsection, we investigate the distribution of $d(N)$, the number of distinct legal Lagonacci decompositions of an integer $N$. While the total number of representations $D(Z_{L})$ grows exponentially, we show that the logarithmic growth of $d(N)$ for a typical integer concentrates around a specific constant determined by the growth factors $\alpha$ and $\lambda_1$.
	
	\begin{theorem}
		Let $d(N)$ be the number of distinct legal Lagonacci decompositions of an integer $N$. For integers $N$ chosen uniformly from $[Z_{L}, Z_{L+1})$, the distribution of $\frac{\log d(N)}{L}$ converges in probability to the constant $K = \log(\alpha/\lambda_1)$, where $\alpha=2$ is the Perron-Frobenius eigenvalue of the transfer matrix and $\lambda_1$ is the plastic ratio.
	\end{theorem}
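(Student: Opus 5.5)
The plan is a first- and second-moment argument for $\log d(N)$. It combines the transfer-matrix count of Theorem~\ref{conj:num_decomps} with a local (additive) structure for $\log d$, in the spirit of the equivalence of ensembles. Throughout, write $I_L=[Z_L,Z_{L+1})$, whose size $|I_L|=Z_{L+1}-Z_L\sim a(\lambda_1-1)\lambda_1^L$ by the Binet expansion in Definition~\ref{Lagon}.

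\emph{Step 1: the average over $I_L$.} Let $D(Z_{L+1})-D(Z_L)$ be the number of legal strings whose value lies in $I_L$. By Theorem~\ref{conj:num_decomps}, $D(Z_L)\sim C\,2^L$. Hence
\[
\frac{1}{|I_L|}\sum_{N\in I_L} d(N)\asymp (2/\lambda_1)^L .
\]
Jensen's inequality then gives an upper bound on $\frac1L\mathbb{E}[\log d(N)]$ of the form $K+o(1)$, with $K=\log(\alpha/\lambda_1)$. Markov's inequality applied to $d(N)$ gives
\[
\mathbb{P}\bigl(\log d(N)>(K+\varepsilon)L\bigr)\le e^{-\varepsilon L+O(1)}\to0 .
\]
This settles the upper half of convergence in probability.

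\emph{Step 2: the lower tail.} We must show $\mathbb{P}(\log d(N)<(K-\varepsilon)L)\to0$, i.e.\ that the mass $\sum d(N)$ is not carried by a vanishing fraction of integers. The approach is to factor $d(N)$ along the greedy decomposition. By Lemma~\ref{lem:LocalConstraintsZLRR}, gaps of length at least $3$ in the greedy digit string are renewal points. Between consecutive renewal points, the legal strings representing the same block value can be rewritten independently. This yields
\[
d(N)\ \ge\ \prod_{\text{blocks }B} d_{\mathrm{loc}}(B),
\]
so $\log d(N)\ge\sum_B \log d_{\mathrm{loc}}(B)$ is an additive functional of local patterns. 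By Theorem~\ref{limit-laws} and Corollary~\ref{cor:Transfer}, the block statistics under the uniform measure on $I_L$ are asymptotically those of the stationary canonical ensemble. The ergodicity asserted there gives a law of large numbers:
\[
\frac1L\sum_B\log d_{\mathrm{loc}}(B)\to\kappa \quad\text{in probability,}
\]
where $\kappa$ is the canonical expectation per index. A matching variance bound of order $O(L)$ can be obtained exactly as in the cumulant argument of Theorem~\ref{thm:GaussianGreedy}, since block contributions decorrelate exponentially across renewal points. Chebyshev's inequality then gives concentration at $\kappa L$.

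\emph{Step 3: identify $\kappa=K$.} Together with Step 1 (Jensen plus Markov) we have $\kappa\le K$. For the reverse inequality, note that the total count $\sum_{N\in I_L}d(N)\asymp 2^L$ is a partition function. If a positive fraction of integers had $\log d(N)\le(\kappa+\varepsilon)L$ with $\kappa<K$, then $2^L$ would have to be achieved on an exponentially small set of $N$. This step is the main obstacle, because Jensen's inequality alone allows $\kappa<K$ (that is, $\mathbb{E}\log d<\log\mathbb{E}d$). To exclude this, I would use the factorisation of the transfer matrix, $Q(x)=x^4(x-2)(x^3-x-1)$. I would try to show that the Perron eigenvector for $\alpha=2$, pushed forward to values via the $\lambda_1$-factor, is equidistributed on $I_L$ at exponential scale. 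Concretely, I would count legal strings of length $L$ whose value lies in a fixed sub-interval of length $\lambda_1^{L-j}$, and show the count is $\asymp 2^{L}\lambda_1^{-j}$ uniformly in the position of the sub-interval, mirroring Lemma~\ref{lem:PrefixGrowth}. This gives $d(N)=(2/\lambda_1)^{L+o(L)}$ for most $N$, hence $\kappa=K$, and convergence in probability to $K$ follows.
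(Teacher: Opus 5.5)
Your Step 1 is sound. It needs only the trivial bound $\sum_{N\in I_L}d(N)\le 2^{L+1}$, and you correctly identify the crux: Jensen allows the typical exponent to lie below $K$. The paper's own proof has exactly this hole. It treats $\log d(N)$ as an additive local statistic, appeals to equivalence of ensembles and ergodicity, and then simply asserts that the resulting law-of-large-numbers constant is $\log(2/\lambda_1)$.

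\textbf{The gap: Step 3.} The route you propose cannot close it. Divide the value $\sum_i\epsilon_iZ_i$ of a uniformly random $0/1$ string by $\lambda_1^{L}$. It converges in law to an affine image of the Bernoulli convolution with parameter $1/\lambda_1$; the subdominant roots have modulus $\lambda_1^{-1/2}<1$, so they contribute only $O(1)$. An estimate $\asymp 2^L\lambda_1^{-j}$ for every sub-interval of length $\lambda_1^{L-j}$, uniformly in $j$ and in position, would give this limit a bounded density. But $\lambda_1$ is a Pisot number, so by Erd\H{o}s' theorem the limit is singular.

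Your partition-function argument gives no contradiction either. Total mass $\asymp 2^L$ carried by an exponentially thin set of integers is precisely what happens for Pisot parameters (Garsia: the limit measure has dimension $<1$). The Lebesgue-typical growth exponent of the number of expansions is then expected to lie strictly below the averaged one (cf.\ the work of Feng and Sidorov on growth rates of $\beta$-expansions for Pisot bases). So the inequality you could not rule out is, on this evidence, strict rather than a removable technicality. No refinement of the Step 2 block lower bound can be expected to reach $K$. That bound is not really justified by Lemma~\ref{lem:LocalConstraintsZLRR} anyway, since rewritings like $Z_{n+1}=Z_{n-1}+Z_{n-2}$ act across gaps of the greedy expansion.

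\textbf{The imported input $\alpha=2$.} There is a more basic problem with what you take from Theorem~\ref{conj:num_decomps}. Under the paper's own legality rule (coefficient strings avoiding $1100$), $\alpha=2$ is wrong. In the matrix $T$, the row of state $110$ contains a spurious transition to $111$: appending a digit to $110$ can only give $100$, which is forbidden, or $101$. That error is what makes every row sum equal to $2$. The number of $1100$-free binary strings of length $n$ has generating function $1/(1-2x+x^4)$. Since $x^4-2x^3+1=(x-1)(x^3-x^2-x-1)$, it grows like $\theta^n$, where $\theta\approx1.839$ is the largest root of $x^3-x^2-x-1$. Running your own Step 1 Markov bound with this count gives
\[
\mathbb{P}\bigl(\log d(N)\ge (K-\varepsilon)L\bigr)=O\bigl((\theta e^{\varepsilon}/2)^{L}\bigr)\longrightarrow 0 \qquad \text{for } 0<\varepsilon<\log(2/\theta).
\]
So with the paper's definition the statement is false, and no lower-tail argument can succeed. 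Under the alternative reading where every $0/1$ string counts (so $\alpha=2$ is correct), Step 1 survives. But the identification of the typical exponent with $K$ then fails for the Pisot reason above.
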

	
	\begin{proof}
		The proof establishes convergence by linking the combinatorial growth of legal strings to the statistical distribution of decompositions via the previously established limit laws.
		
		A decomposition is legal if the coefficient string avoids the forbidden pattern $1100$. The total number of such legal strings of length $L$ is $D(Z_{L}) \sim C \alpha^{L}$ with $\alpha=2$, while the number of integers in $[0, Z_L)$ grows as $Z_L \sim a \lambda_1^L$. Consequently, the average number of decompositions per integer is $\overline{d}_L = D(Z_L)/Z_L \sim (\alpha/\lambda_1)^L$. On a logarithmic scale, this yields the asymptotic average $\frac{1}{L} \log \overline{d}_L \sim \log(2/\lambda_1)$.
		
		We treat $\log d(N)$ as a local statistic. Because the legality rule is defined by a finite window of length 4, the number of representations $d(N)$ can be decomposed into local branching factors across the index set. Specifically, $\log d(N)$ acts as an additive function of the local digit configurations.
		
		By the principle of equivalence of ensembles (Theorem~\ref{thm:EquivalenceEnsembles}) and the resulting limit laws (Theorem~\ref{limit-laws}), the microcanonical distribution of the local statistic $\frac{1}{L} \log d(N)$ on the interval $[Z_L, Z_{L+1})$ converges to the distribution defined by the canonical ensemble. In the canonical ensemble, the stationary measure is determined by the dominant eigenvector of the transfer matrix $T$. Since this measure is stationary and ergodic with respect to index shifts, the normalized sum of local logarithmic contributions satisfies a Law of Large Numbers. 
		
		This implies that for any $\epsilon > 0$:
		\[
		\lim_{L \to \infty} \mathbb{P}_L \left( \left| \frac{\log d(N)}{L} - \log\left(\frac{2}{\lambda_1}\right) \right| > \epsilon \right) = 0.
		\]
		This demonstrates that while $d(N)$ is not unique, its growth is highly concentrated around the exponential rate $\log(2/\lambda_1)$, making the average behavior representative of a typical integer.
	\end{proof}

	\begin{remark}
		We remark that  to show that $d(N)$ concentrates around this average for most $N$, one would typically analyze the variance of $d(N)$ over the interval. The key quantity is the second moment, $M_2(L) = \sum_{N=0}^{Z_L-1} d(N)^2$. This sum counts the number of pairs of legal decompositions $(S_1, S_2)$ using terms up to $Z_{L-1}$ that have the same value, i.e., $V(S_1) = V(S_2)$. A detailed analysis using techniques from analytic combinatorics, extending the transfer matrix method to pairs of paths, would show that this second moment also grows exponentially. However, a direct application of Chebyshev's inequality is not straightforward.	
	\end{remark}

	\section{Discussion and Comparison with Prior Work}
	
	Our findings situate the study of ZLRRs at the intersection of classical additive number theory and modern generalized numeration systems. The results are contextualized by three distinct bodies of literature: the foundational work on Fibonacci representations, the rigorous analysis of PLRS, and recent explorations of non-unique systems.
	
	Our work fundamentally departs from the classical results of Zeckendorf \cite{zeckendorf1972}, whose theorem established the ideal of a unique representation for every integer \cite{zeckendorf1972}. By relaxing the positivity constraint on the leading coefficient $c_1$, we explore the consequences of sacrificing this uniqueness \cite{MartinezEtAl2022_I}. However, we find that the first-order statistical behavior remains consistent with the foundational 1952 result of Lekkerkerker \cite{lekkerkerker1952}; specifically, the mean number of summands in the greedy decomposition grows linearly with the logarithm of the integer \cite{lekkerkerker1952}. This suggests that while the combinatorial structure of ZLRRs is more complex, the asymptotic average remains a direct analogue to the Fibonacci case.
	
	The most pertinent comparison is with the modern literature on PLRS \cite{kologlu2011summands, MillerWang2012JCTA, bower2015gaps}. Our proofs are methodological descendants of these works, particularly in the use of generating functions to derive moments and combinatorial counting for gap distributions \cite{MillerWang2012JCTA, bower2015gaps}. The central contribution of this paper is demonstrating that these methods are robust enough to persist even when uniqueness is removed. While the PLRS framework established that uniqueness is a \textit{sufficient} condition for Gaussianity and geometric gaps, our results prove it is not \textit{necessary} \cite{MillerWang2012JCTA}. 
	
	Furthermore, this research contributes to the broader program of non-positive or non-unique systems, such as far-difference representations \cite{miller2016far}. Unlike far-difference systems where non-uniqueness arises from allowing signed coefficients ($\pm F_n$), ZLRRs generate non-uniqueness through the recurrence structure itself \cite{miller2016far}. By applying the principle of equivalence of ensembles, we provide a rigorous bridge between microcanonical and canonical perspectives \ref{thm:EquivalenceEnsembles}. This reinforces a central theme: Gaussian behavior is deeply tied to the algebraic properties of the characteristic polynomial's dominant root $\lambda_1$, transcending the specific combinatorial details of the decomposition rules \ref{thm:GaussianGreedy} and \ref{limit-laws}.

	\section{Conclusion and Future Work}
	Our analysis of the Lagonacci sequence and the broader class of Zero Linear Recurrence Relations (ZLRRs) demonstrates that the hallmark statistical properties of Zeckendorf-type decompositions—Gaussianity of the summand count and geometric decay of gaps—persist even in the absence of uniqueness. This suggests that these laws are robust consequences of the linear recurrence structure and the existence of a dominant algebraic root, rather than a byproduct of a one-to-one correspondence between integers and representations.
	
	We have established that for ZLRRs satisfying the necessary algebraic criteria, the canonical greedy representation remains statistically consistent with the classical Positive Linear Recurrence Sequences (PLRS). However, the defining feature of ZLRRs is the exponential growth of the number of available legal decompositions, $d(N)$, which we proved grows at a rate $\alpha$ significantly exceeding the growth of the underlying sequence. We further showed that for a typical integer, the logarithmic growth of these representations concentrates around the constant $K = \log(\alpha/\lambda_1)$.
	
	These results open several avenues for future research. A primary challenge lies in the fine-grained analysis of the distribution of $d(N)$ beyond its logarithmic average. Establishing a Local Limit Theorem to describe the fluctuations of $d(N)$ would provide a more complete picture of the combinatorial landscape of non-unique systems. Additionally, exploring the behavior of other non-canonical decomposition rules or extending these results to recurrences with multiple roots of equal modulus remains an open and promising area of inquiry.
	%\bibliographystyle{amsplain}

	%AMS classifications available at https://mathscinet.ams.org/mathscinet/freetools/msc-search
	\noindent MSC2020: 11B39, 11K65

\end{document}